\makeatletter\@addtoreset{equation}{section} \makeatother
\newtheorem{theorem}{Theorem}[section]
\newtheorem{lemma}{Lemma}[section]
\newtheorem{definition}{Definition}[section]
\def\R{\mathbb{R}}
\def\W{\mathbb{W}}
\def\S{\mathcal{S}}
\def\T{\mathcal{T}}
\def\d{\mathrm{d}}
\def\D{\mathcal{D}}
\def\qed{~\relax\ifmmode\hskip2em \Box
 \else\unskip\nobreak\hskip1em \hfill$\Box$
 \fi \newline}
\def\R{\mathbb{R}}
\def\S{\mathbb{S}}
\def\D{\mathcal{D}}
\def\x{\bm{x}}
\def\y{\bm{y}}
\def\O{\mathcal{O}}
\def\Z{\mathbb{Z}}
\def\Zd{\Z^d}
\def\T{\mathbb{T}}
\def\Td{\T^d}
\def\TN{\mathcal{T}_N}
\def\J{\mathcal{J}}
\def\JN{\mathcal{J}_N}
\def\ftil{\widetilde{f}}
\def\fhat{\widehat{f}}
\def\phat{\widehat{\psi}}
\def\chat{\widehat{\chi_N}}
\def\d{\text{d}}
\def\i{\text{i}}
\def\k{\bm{k}}
\def\j{\bm{j}}
\def\h{\bm{h}}
\def\c{\bm{c}}
\def\s{\bm{s}}
\def\bl{\bm{\ell}}
\def\bnu{\bm{\nu}}
\def\bpsi{\bm{\psi}}
\def\bAlp{\bm{\alpha}}
\def\eikx{e^{\i\k\cdot\x}}
\def\mix{\text{mix}}
\def\sumkzd{\sum_{\k\in\Zd}}
\newcommand*\pFq[6][8]{%
	\begingroup % only local assignments
	\pFqmuskip=#1mu\relax
	\mathchardef\normalcomma=\mathcode`,
	% make the comma math active
	\mathcode`\,=\string"8000
	% and define it to be \pFqcomma
	\begingroup\lccode`\~=`\,
	\lowercase{\endgroup\let~}\pFqcomma
	% typeset the formula
	{}_{#2}F_{#3}{\left(\genfrac..{0pt}{}{#4}{#5};#6\right)}%
	\endgroup
}
\newcommand{\pFqcomma}{{\normalcomma}\mskip\pFqmuskip}
\pgfplotsset{compat=1.18}
\begin{document}
\begin{frontmatter}
\title{Quasi-interpolation using generalized Gaussian kernels}
%\tnoteref{label1}}
%\tnotetext[label1]{The first author was supported in part by NSFC (No. 12271002). The third author was supported in part by NSFC (No. 12101310), and the Fundamental Research Funds for the Central Universities (No. 30923010912).}

\author{Wenwu Gao\fnref{label2}}
\ead{wenwugao528@163.com}

\author{Le Hu\fnref{label2}}
\ead{hule1907356451@163.com}

\author{Zhengjie Sun\fnref{label3}\corref{cor1}}
\ead{zhengjiesun@njust.edu.cn}
\cortext[cor1]{Corresponding author}

\author{Changwei Wang\fnref{label2}}
\ead{dieck-w-mathematics@outlook.com}

 \address[label2]{School of Big Data and Statistics, Anhui University, Hefei, China}
 \address[label3]{School of Mathematics and Statistics, Nanjing University of Science and Technology, Nanjing, China}

\begin{abstract}
This paper focuses on developing a framework for constructing  quasi-interpolation with the highest achievable approximation order from generalized Gaussian kernels with the help of kernel restriction trick and periodization technique. We first demonstrate that when we restrict generalized Gaussian kernels satisfying generalized Strang-Fix conditions of order $s$ over a torus, the corresponding restricted kernels in tensor-product forms fulfill periodic Strang-Fix conditions of the same order $s$. Then, based on these restricted kernels,  we construct a periodic quasi-interpolant in Schoenberg's form and derive its error estimates for periodic function approximation  over a torus, which reveals that our quasi-interpolant attains the highest approximation order $s$. Finally, using the  periodization technique, we extend the periodic quasi-interpolant to its nonperiodic counterpart with the highest approximation order $s$ for approximating a general function defined  over a cube via a torus-to-cube transformation. This result stands in stark contrast to classical quasi-interpolation counterparts, which often yield much lower approximation orders than those dictated by the generalized Strang-Fix conditions of  generalized Gaussian kernels. Furthermore, we propose a sparse grid counterpart for high-dimensional function approximation to alleviate the curse of dimensionality. Numerical simulations confirm that our quasi-interpolation scheme is simple and computationally efficient.
\end{abstract}

\begin{keyword}
Quasi-interpolation; Generalized Gaussian kernels; Generalized Strang-Fix conditions; Kernel restriction; Periodization.\\
AMS Subject Classifications: 41A30, 41A25, 42B05, 65D15
\end{keyword}
% \PACS{PACS code1 \and PACS code2 \and more}

\end{frontmatter}
\section{Introduction}
Quasi-interpolation is a well-established tool in function approximation and its related areas \cite{buhmann2022quasi,buhmann2024new, 
dung2020dimension,kammerer2021high,  kolomoitsev2023sparse,kolomoitsev2021quasi,  nasdala2021efficient, ortmann2024high,peigney2021fourier,
potts2016sparse,sun2024energy}.
It provides an approximation via a weighted average of sampling data without the need to solve any large-scale linear system of equations \cite{ramming_2018mcom_kernel}. In addition, it possesses optimality and regularization properties \cite{gao2020optimality}. More importantly, one can even construct a quasi-interpolant such that it preserves   inner structures of the target function, for example, Wu and Schaback \cite{wu1994shape}  constructed a quasi-interpolant  preserving positivity,  monotonicity, convexity, while reference \cite{gao2022divergence} constructed divergence-free quasi-interpolation for vector-valued function approximation. 

Given a target function $f$, a set of functions $\Phi_{\bm{j}}$ and sampling data $\lambda_{\bm{j}}(f), \bm{j}\in \bm{J}$, quasi-interpolation takes a general form 
$Qf=\sum_{\bm{j}\in \bm{J}}\lambda_{\bm{j}}(f)\Phi_{\bm{j}}$. In addition,  to derive corresponding approximation orders, $Qf$ is usually required to reproduce polynomial of certain order, which can  be either imposed conditions on  $\Phi_{\bm{j}}$ or $\lambda_{\bm{j}}(f)$ or even both \cite{buhmann2022quasi}.   The best investigated quasi-interpolation maybe the Schoenberg's model taking the form \cite{rabut1992introduction}
$ Q_hf=\sum_{\bm{j}\in\Z^d}f(\bm{j}h)\Psi(\cdot/h-\bm{j})$   
with point evaluation functionals $\lambda_{\bm{j}}(f)=f(\bm{j}h)$ and a set of functions generated by dilation and translation of a single kernel $\Psi$, i.e., $\Phi_{\bm{j}}=\Psi(\cdot/h-\bm{j})$. Moreover, Jia and Lei \cite{jia1993new} even showed that approximation order of $Q_h$ is completely characterized by Strang-Fix conditions of $\Psi$ \cite{strang1971fourier}. More precisely, $Q_h$ provides an approximation error bounded by powers of $\mathcal{O}(h^{s})$ if and only if $\Psi$ satisfies Strang-Fix conditions of order $s$ for some positive integer $s$. There have been many pioneering works on constructing kernel $\Psi$ with high-order Strang-Fix conditions, see, \cite{buhmann2015pointwise,buhmann1995quasi,ortmann2024high,ortmann2025quasi,schaback1997construction} for example. 
However, Strang-Fix conditions rule out many popular radial functions for quasi-interpolation \cite{fasshauer2007meshfree,zongmin1997compactly}. Nevertheless, radial functions have been a useful tool for multivariate (high-dimensional) function approximation including interpolation \cite{schaback_1999mcom199_improved,wendland2004scattered, ye_MAMS2019_generalized, ye_ACHA2023_positive}, as well as quasi-interpolation  \cite{buhmann1995quasi,franz2023multilevel,jeong2021approximation, ortmann2024high, sun_mcom2021_probabilistic,usta2018multilevel}. 

 Buhmann, Dyn and Levin \cite{buhmann1995quasi} initiated the idea of taking a linear combination of  translations of radial functions to form the quasi-Lagrange functions satisfying high-order Strang-Fix conditions such that the resulting quasi-interpolant  in the Schoenberg's form provides the highest attainable approximation order.  Buhmann and Dai \cite{buhmann2015pointwise} derived pointwise error estimates of quasi-interpolation with radial functions in terms of local polynomial reproduction.  Recently,  Ortmann and Buhmann \cite{ortmann2024high, ortmann2025quasi} constructed a quasi-interpolant  using   generalized multiquadrics and generalized thin plate splines.  More importantly, they showed that the resulting quasi-interpolation achieves the optimal or near-optimal approximation order within the corresponding principal shift-invariant subspace. But the method are only applicable for the radial function whose generalized Fourier transform has a singularity at origin.   Fasshauer and Zhang \cite{fasshauer2007iterated} constructed  iterated quasi-interpolation with radial kernel that converges to corresponding radial basis function interpolation with respect to the number of iterations. The iterated quasi-interpolation reformulated in matrix form can be interpreted  as Laurent expansion of the inversion of the interpolation matrix. Therefore,  the involved radial kernel is required to be positive definite with interpolation matrix whose spectrum is larger than one and less than two to ensure convergence of the asymptotic expansion.  

Under the framework of approximate approximation \cite{maz2007approximate}, Maz'ya and his coauthors studied quasi-interpolation with  scaled radial kernels satisfying  (high-order) moment conditions \cite{ma2009approximation,maz1996approximate,maz2001quasi,maz1999construction}. The resulting quasi-interpolant in Schoenberg's form can provide \textbf{approximate} convergence order $s$ up to a prescribed saturation error if the radial kernel satisfies moment conditions of order $s$. They even showed that one can appropriately choose the scale parameter such that the saturation error is negligible to the precision of computer.  But there is no asymptotic results in the classical sense since $L_\infty$-approximation errors do not converge to zero. 

Light and Cheney \cite{light1992quasi} constructed a quasi-interpolant   with radial kernels satisfying  polynomial reproduction in the convolutional sense \cite{cheney2009course}.  Wu and Liu \cite{wu2004generalized}  introduced the concept of generalized Strang-Fix condition and  constructed a quasi-interpolant  with radial kernels for scattered data.   The generalized Strang-Fix condition only requires the first part of Strang-Fix conditions and thus can be easily satisfied by most kernels including radial kernels.  Moreover, Gao and Wu \cite{gao2017constructing} even proposed a general approach for constructing radial kernels with high-order generalized Strang-Fix conditions. As a byproduct, they also showed that moment condition, polynomial reproduction in the convolutional sense, and generalized Strang-Fix condition are equivalent to each other.    Gao et al. \cite{gao2020multivariate} constructed Monte-Carlo approximation based on quasi-interpolation with radial kernels satisfying high-order generalized Strang-Fix condition. 

However,  quasi-interpolation  with radial kernels satisfying generalized Strang-Fix conditions in Schoneberg's form often provides much lower approximation orders. To improve approximation accuracy, some authors constructed multilevel quasi-interpolation with radial kernels. Usta and Levesley \cite{usta2018multilevel} constructed multilevel quasi-interpolation using Gaussian kernels on sparse grids. Recently, Franz and Wendland \cite{franz2023multilevel} proposed a convergent and efficient multilevel quasi-interpolation scheme with compactly supported radial kernels and showed  that it  converges linearly in the number of levels. Later,   Wendland and his coauthors extended the multilevel approach  to high-dimensional function approximation  \cite{kempf2023high} and even manifold-valued function approximation \cite{sharon2023multiscale}. The multilevel technique can improve approximation accuracy of quasi-interpolation, but it still can not increase the approximation order to the order of  generalized Strang-Fix conditions. 

Motivated by above discussions, a question arises: can we construct a quasi-interpolation scheme with a radial kernel in the Schoenberg's form 
that achieves the \textbf{highest} approximation order  (that is up to the order of the generalized Strang-Fix conditions satisfied by the radial kernel) ? In this paper, we provide an affirmative answer by developing a quasi-interpolation method based on   Gaussian kernel due to three observations. First, Gaussian kernel is one of  the most popular and important radial kernels. Second, Gaussian kernel has  an explicit form of  Fourier transform, which facilitates us to   derive the order of generalized Strang-Fix condition satisfied by (generalized) Gaussian kernel. More importantly, we can verify that the  restriction of (generalized) Gaussian kernel to a circle  satisfies periodic Strang-Fix conditions of the same order (see Theorem \ref {thm:Integral} and Theorem \ref{PSoPS}).

We begin by demonstrating that when we restrict a two-dimensional generalized Gaussian kernel satisfying a certain order of generalized  Strang-Fix conditions   to a circle,   the  resulting kernel satisfies periodic Strang-Fix condition of the same order.  Then, we apply the tensor-product technique to extend the restricted generalized Gaussian kernel to a multivariate counterpart, from which,   we  construct a periodic quasi-interpolation  in the Schoenberg's model  given sampling data at regularly spaced  centers  over a torus. More importantly, we establish that this periodic quasi-interpolation provides the  highest achievable approximation order.  Finally,  we  extend the methodology to non-periodic function approximation  over a cube by using the periodization technique \cite{nasdala2021efficient}, as is outlined in \cite{gao2024quasi}.

 The paper is organized as follows. Section \ref{Sec:Prelim} introduces key concepts relevant to this work, including the Wiener space, periodic Strang-Fix conditions and radial kernels. Section \ref{sec:MainResult}, divided into three subsections, develops the main quasi-interpolation scheme and derives its approximation order. Numerical simulations are presented in Section \ref{sec:NumerSimul}, while conclusions and discussions are provided in Section \ref{sec:Conclusion}.

\section{Preliminaries}
\label{Sec:Prelim}

\subsection{Wiener spaces}
We will deal with the functions in Wiener spaces. Let the $d$-dimensional  torus $\T^d$ be defined as
\begin{equation*}\label{eq:Defi_d_torus}
	\T^d:=\{\x=(x_1,\ldots,x_d)\in\R^d: |x_r|\leq \pi,~r=1,\ldots,d\}.
\end{equation*}
Then the $\bm{k}$-th Fourier coefficient of a function $f\in L_1(\T^d)$ is defined via
\begin{equation*}
\fhat(\bm{k}):=\frac{1}{(2\pi)^d}\int_{\T^d}f(\x)e^{-\i\bm{k}\cdot\x}\d\x,~~\k=(k_1,\ldots,k_d)\in\Z^d.
\end{equation*}
Let $\D(\T^d)$ be the space of $2\pi$-periodic functions with infinite smoothness, and $\D'(\T^d)$ be the dual space. Any function $f\in\D'(\T^d)$ has a Fourier series expansion
\begin{equation*}
f(\x)=\sum_{\bm{k}\in\Z^d}\fhat(\bm{k})e^{\i \bm{k}\cdot \x}.
\end{equation*}
Note that  convergence of above series is related to  decay of  Fourier coefficients, which can be characterized  in Wiener spaces \cite{folland_1999book_realAnalysis,stein_2011book_fourier}. For $1\leq q\leq \infty$, $\gamma\geq 0$, we respectively denote $A_q^{\gamma} (\T^d)$ and $A_{q,\mix}^{\gamma}(\T^d)$ the periodic (isotropic) Wiener space
\begin{equation}\label{eq:AqalpSpace}
	\begin{aligned}
		A_q^{\gamma}(\T^d):=\left\{f(\x)\in L_1(\T^d):\big\|(1+|\k|_2^2)^{\gamma/2}\fhat(\k)\big\|_{\ell_q(\Z^d)}<\infty\right\},
	\end{aligned}
\end{equation}
and the periodic (mixed) Wiener space
\begin{equation}\label{eq:AqalpMixSpace}
	A_{q,\mix}^{\gamma}(\T^d):=\left\{f(\x)\in L_1(\T^d):\big\|\prod_{r=1}^d(1+|k_r|^2)^{\gamma/2}\fhat(\k)\big\|_{\ell_q(\Z^d)}<\infty\right\}.
\end{equation}
Particularly, by taking $q=1$ and $\gamma=0$, the space $A_1^0(\T^d)$ reduces to the well-known Wiener space $A(\T^d)$ containing functions with absolutely summable Fourier series. Moreover, with $q=2$, the space $A_2^{\gamma}(\T^d)$ is exactly the Sobolev space $H_2^{\gamma}(\T^d)$ \cite{sprengel2000class}. 

The above Fourier series expansion has a discrete counterpart. More precisely, let $N$ be a positive integer and define an index set $\J_N$ by
$$\J_N=\{\bm{j}=(j_1,\ldots,j_d)\in\Z^d:~-\frac{N}{2}\leq j_r<\frac{N}{2},~r=1,\ldots,d\}.$$ 
In the sequels, we will work with the equidistant grid set $\TN=\{\frac{2\pi \bm{j}}{N},~\bm{j}\in\J_N\}$ on $\T^d$. 
Given discrete function values sampled over the set $\TN$, we use $\ftil$ to denote the discrete Fourier series expansion with the $\k$-th discrete Fourier coefficient
\begin{equation}
\label{eq:discrete_fourier_coefficien}
	\ftil_{\bm{k}}:=\frac{1}{N^d}\sum_{\bm{j}\in \JN}f\left(\frac{2\pi\bm{j}}{N}\right)e^{-2\pi\i \bm{k} \cdot \bm{j}/N}.
\end{equation}
In particular, for any $f\in A(\T^d)$, we even have the aliasing formula \cite[(1)]{sprengel2000class}
\begin{equation}\label{eq:aliasing_formula}
\ftil_{\k}=\sum_{\bl\in\Z^d}\fhat(\bm{k}+\bl N).
\end{equation}

\subsection{Periodic Strang-Fix conditions}

Periodic Strang-Fix conditions \cite{brumme1994error,kolomoitsev2020approximation,
  sickel1999some,sprengel2000class} can be regarded as the periodic counterpart of (complete) Strang-Fix conditions \cite{strang1971fourier}. Many functions satisfy periodic Strang-Fix conditions, such as  $2\pi$-periodized centered B-splines, linear combination of trigonometric polynomials \cite{kolomoitsev2020approximation}, periodization of Gaussian functions \cite{hubbert2023convergence},  and so forth.  In particular, Sprengel \cite{sprengel2000class} proposed a further improvement of periodic Strang-Fix condition as follows \cite[Definition~3]{sprengel2000class}.
\begin{definition}\label{def:PSF}
   The $2\pi$-periodic kernel $\chi_N$ satisfies the periodic Strang-Fix conditions of order $s>0$ with $q\ge 1$ and $\tau \ge 0$, if for all $\k\in\JN$ the inequalities 
    \begin{align}
     	|1-N^d \chat(\k)|&\leq b_{\bm{0}}|\k|_2^s N^{-s},\\
		|N^d \chat(\k+\bnu N)|&\leq  b_{\bnu}|\k|_2^s N^{-s-\tau},~~\bnu=(\nu_{1},\cdots,\nu_{d})\in\Z^d\backslash\{\bm{0}\}\label{eq:new_PSF_2}
    \end{align}
hold for some sequence of non-negative constants $\{b_{\bnu}\}_{\bnu \in \Z^d}$ with 
$$
|| (1+|\bnu|_2^2)^{\tau /2}b_{\bnu}||_{\ell_q(\mathbb{Z}^d)} < \infty.
$$
\end{definition}

\begin{comment}\begin{definition}{\emph{(Periodic Strang-Fix conditions)}}
\label{def:PSF}
	Let  $\{b_{\bnu}\}\in\ell_2(\Z^d)$  be some non-negative sequence and $\J_N$ be the index set defined as above.  Let $\chi_N$ be a $2\pi$-periodic continuous function. We say $\chi_N$ satisfies  periodic Strang-Fix conditions of order $s$  if  the following inequalities 
	\begin{align}
		|1-N^d \chat(\k)|&\leq b_{\bm{0}}|\k|_2^s N^{-s},\label{eq:PSF-1}\\
		|N^d \chat(\k+\bnu N)|&\leq b_{\bnu}|\k|_2^s N^{-s},~~\bnu\in\Z^d\backslash\{\bm{0}\}\label{eq:PSF-2}
	\end{align}
	hold true  for all $\k\in\JN$.
\end{definition}
\end{comment}
Moreover,  Sprengel \cite{sprengel2000class}  pointed out that if $\chi_N$ additionally satisfies the cardinal interpolation property
$$\chi_N\Big(\frac{2\pi\bm{j}}{N}\Big)=\delta_{0,\bm{j}},~\bm{j}\in\JN,$$ 
where $\delta_{0,\bm{j}}$ is a Kronecker delta symbol, i.e., 
$$  
\delta_{0,\bm{j}} = 
\begin{cases}
1, & \text{if } \bm{j} = 0, \\
0, & \text{if } \bm{j} \neq 0,
\end{cases}
$$
then the error of corresponding Lagrange interpolation 
\begin{equation}\label{eq:LagInterp}
I_Nf(\cdot)=\sum_{\bm{j}\in\JN}f\Big(\frac{2\pi\bm{j}}{N}\Big)\chi_N\Big(\cdot-\frac{2\pi\bm{j}}{N}\Big)
\end{equation}
can be completely characterized by periodic Strang-Fix conditions.  
\begin{lemma}\label{lem:ConvCardInterp}
	Let  $f\in A_q^{\mu}(\T^d)$ with $q\geq 1$, $\mu\geq \tau\geq 0$, and $\mu> d(1-1/q)$. Suppose that the kernel $\chi_N$  of the above cardinal interpolation $I_Nf$ satisfies  periodic Strang-Fix conditions of order $s> d/2$. Then for $\sigma=\min\{\mu,s+\tau\}$, there exists a constant $C_{\sigma}$ such that the inequality
	$$\|f-I_N f\|_{A_q^{\tau}(\T^d)}\leq C_{\sigma} N^{-(\sigma-\tau)}\|f\|_{A_q^{\mu}(\T^d)}$$
	holds true for all $f\in A_q^{\mu}(\T^d)$.
\end{lemma}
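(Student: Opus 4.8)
The plan is to pass everything to the Fourier side, where the cardinal interpolant has an explicit symbol, and then to split the error into a ``low-frequency'' part controlled by the periodic Strang--Fix conditions \eref{eq:PSF-1}--\eref{eq:PSF-2} and a ``high-frequency'' tail controlled purely by membership of $f$ in $A_q^{\mu}(\T^d)$. First I would write, for $\k\in\JN$, the $\k$-th Fourier coefficient of $I_Nf$; since $\chi_N$ is cardinal on $\TN$ the interpolant reproduces the sampled data, so $\widehat{I_Nf}(\k)=N^d\,\chat(\k)\,\ftil_{\k}$ for $\k\in\JN$ and $\widehat{I_Nf}(\k)=N^d\,\chat(\k)\,\ftil_{\k \bmod N}$ for general $\k$, where (using $f\in A_q^\mu\subset A(\T^d)$, which holds because $\mu>d(1-1/q)$) the aliasing formula gives $\ftil_{\k}=\sum_{\bl\in\Z^d}\fhat(\k+\bl N)$. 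Subtracting $\fhat(\k)$ and using the telescoping $\fhat(\k)=\fhat(\k)$, one gets on the frequency block $\k\in\JN$
\begin{equation*}
\widehat{(f-I_Nf)}(\k)=\bigl(1-N^d\chat(\k)\bigr)\fhat(\k)-\sum_{\bl\neq 0}N^d\chat(\k+\bl N)\,\fhat(\k+\bl N),
\end{equation*}
together with the genuinely high-frequency contribution $\fhat(\k+\bl N)$ for $\bl\neq0$ appearing with coefficient $1$ in $f$ but routed into the block $\k$ by $I_Nf$; collecting terms, the error coefficient attached to index $\k+\bl N$ is $(1-N^d\chat(\k))$ when $\bl=0$ and $-N^d\chat(\k+\bl N)$ when $\bl\neq0$ (plus the diagonal $\bl=0$ true-frequency reproduction), so every term carries a factor bounded by $b_{\bl}|\k|_2^m N^{-m}$ via the periodic Strang--Fix inequalities.

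Next I would estimate the $\AqS$-norm, i.e.\ the weighted $\ell_q(\Z^d)$ norm of $(1+|\cdot|_2^2)^{\alpha/2}\widehat{(f-I_Nf)}$. Insert the bound $|1-N^d\chat(\k)|,\ |N^d\chat(\k+\bl N)|\le b_{\bl}|\k|_2^m N^{-m}\le b_{\bl}(1+|\k|_2^2)^{m/2}N^{-m}$, and on the block $\k\in\JN$ bound the extra weight $(1+|\k+\bl N|_2^2)^{\alpha/2}$ by $(1+|\k+\bl N|_2^2)^{\alpha/2}$ directly. The key elementary inequality is: for $\k\in\JN$ and $\bl\in\Z^d$, $N^{-m}(1+|\k|_2^2)^{m/2}(1+|\k+\bl N|_2^2)^{\alpha/2}\le C\,(2\pi/N)^{\,\sigma-\alpha}\,(1+|\k+\bl N|_2^2)^{\mu/2}$ whenever $\sigma=\min\{\mu,m+\alpha\}$ — this is where the choice of $\sigma$ is forced, distinguishing the case $\mu\le m+\alpha$ (the full approximation power $m$ is not used up and the exponent is $\mu-\alpha$) from $\mu\ge m+\alpha$ (the exponent saturates at $m$). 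One then recognizes $(1+|\k+\bl N|_2^2)^{\mu/2}|\fhat(\k+\bl N)|$ as (a piece of) the $A_q^{\mu}$ weight sequence of $f$. Summing the $\ell_q$ norms over $\bl$ against $\{b_{\bl}\}$ and using $\{b_{\bl}\}\in\ell_2\subset\ell_1$ (or $\ell_\infty$, together with $\mu>d(1-1/q)$ to make the residual frequency sum converge) gives the stated bound with $C_\sigma$ absorbing $\|b\|$, $2\pi$ powers, and the constant from the elementary inequality.

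The main obstacle is the bookkeeping in that elementary inequality: one must carefully relate $|\k|_2$ (small, $\lesssim N$) to $|\k+\bl N|_2$ (possibly large) uniformly in $\bl$, splitting according to whether the target frequency lies in the fundamental block or far out, and verifying that in the far-out regime the loss of the factor $(1+|\k|_2^2)^{m/2}$ versus the gain of $(1+|\k+\bl N|_2^2)^{(\mu-\alpha)/2}$ (or $m/2$) is uniformly controlled — this is exactly the point where the hypotheses $m\ge d/2$ and $\mu>d(1-1/q)$ enter to guarantee summability of the tail. A secondary technical point is justifying the aliasing formula and the termwise manipulation of Fourier series, which is legitimate because $f\in A(\T^d)$ and $\chi_N$ is continuous with $\{N^d\chat(\k)\}$ bounded. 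Modulo these estimates the argument is the standard Strang--Fix-to-error-bound mechanism, adapted to the periodic weighted setting; I expect the remaining computations to be routine.
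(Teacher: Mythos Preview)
The paper does not actually prove this lemma: it is quoted as a known result from \cite{sprengel2000class}, and no argument is given. Your outline is the standard proof and is also the Fourier-side machinery the paper itself deploys later when bounding $\|I_Nf-Q_sf\|_{\AqS}$ in the proof of its main theorem, so in spirit you are aligned with both the cited source and the paper's own techniques.

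Two slips worth fixing. First, the displayed formula for $\widehat{(f-I_Nf)}(\k)$ on the fundamental block $\k\in\JN$ is wrong: the aliasing sum carries the single factor $N^d\chat(\k)$, not $N^d\chat(\k+\bl N)$, i.e.
\[
\widehat{(f-I_Nf)}(\k)=\bigl(1-N^d\chat(\k)\bigr)\fhat(\k)\;-\;N^d\chat(\k)\sum_{\bl\ne0}\fhat(\k+\bl N).
\]
The factor $N^d\chat(\k+\bl N)$ on which you want to invoke \eref{eq:PSF-2} appears instead when you compute the error coefficient at the \emph{high} frequency $\k+\bl N$, $\bl\ne0$. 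Your surrounding prose suggests you know this, but the display would derail the estimate if taken literally. Second, you write ``$\{b_{\bl}\}\in\ell_2\subset\ell_1$'', which is the wrong inclusion; only $\ell_2$ is assumed, so the sum over $\bl$ is not $\ell_1$-controlled for free. This is precisely where H\"older against a decaying weight sequence, together with the hypotheses $m\ge d/2$ and $\mu>d(1-1/q)$, do real work---as you correctly anticipate in your last paragraph. Neither point is a conceptual gap; the architecture of your argument is sound.
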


\subsection{Radial kernels}
For any function $f\in L_2(\R^d)$, we define its Fourier transform pair as follows
\begin{equation*}\label{eq:FourTranDef}
	\widehat{f}(\bm{\omega})=\frac{1}{(2\pi)^{d/2}}\int_{\R^d}f(\x)e^{-\i \bm{\omega}\cdot \x}\d \x, ~~f(\x)=\frac{1}{(2\pi)^{d/2}}\int_{\R^d}\widehat{f}(\bm{\omega})e^{\i \bm{\omega}\cdot \x}\d \bm{\omega}.
\end{equation*}
In particular, for a radial function 
$$\Phi(\x)=\phi(\|\x\|)\in L_1(\R^d)\cap C(\R^d),\quad \text{with}~\phi:\R^+\rightarrow \R,$$ 
its Fourier transform is also radial and can be further represented by
\begin{equation}\label{eq:RBFFourTran}
	\widehat{\Phi}(\bm{\omega})=\mathcal{F}_d\phi(r)=(2\pi)^{d/2}r^{-(d-2)/2}\int_{0}^{\infty}\phi(t)t^{d/2}J_{(d-2)/2}(rt)dt,~~r=\|\bm{\omega}\|,
\end{equation}
where $J_{\nu}(z)$ denotes the Bessel function of the first kind.

The restriction of radial kernels from high-dimensional Euclidean spaces to low-dimensional manifolds has been widely employed to construct important classes of basis functions and to analyze their approximation properties. Notable examples include spherical basis functions, obtained by restricting radial functions in $\mathbb{R}^d$ to $\mathbb{S}^{d-1}$ \cite{hubbert2002radial,narcowich2007approximation}, and zonal periodic basis functions derived through restriction to the circle \cite{fuselier2015order}. 
More generally, kernels on embedded Riemannian submanifolds are frequently constructed via restriction of radial basis functions \cite{fuselier2012scattered}. 

Specifically, given a RBF kernel $\Phi(\x,\y) = \phi(\|\x - \y\|)$ defined on $\mathbb{R}^d \times \mathbb{R}^d$ and a $d_{\mathbb{M}}$ dimensional compact manifold $\mathbb{M}$ isometrically embedded in $\mathbb{R}^d$, 
the \emph{restricted kernel} $\Psi: \mathbb{M} \times \mathbb{M} \to \mathbb{R}$ is defined as:
\begin{equation*}\label{eq:RestrictionTechnique}
    \Psi(\x,\y) := \phi\left( \|\x - \y\| \right)|_{\x,\y \in \mathbb{M}}.
\end{equation*}
This restriction preserves both the positive definiteness and smoothness properties of $\Phi$. 
Of particular relevance to our work is the case $\mathbb{M} = \mathbb{S}^{d-1}$, where Narcowich et al.\cite{narcowich2007approximation} established a fundamental connection between the Fourier transform of radial kernels and the Fourier-Legendre coefficients of the restricted spherical kernels. 
We state this relationship below, as it plays a central role in our subsequent analysis.

\begin{lemma}{(\cite[Proposition 3.1]{narcowich2007approximation})}\label{lem:FourierCoeff}
	Let $\Phi$ be a conditionally positive definite  radial function of order $k$ defined on $\R^{d}$ and $\psi(\x\cdot\y):=\Phi(\x, \y)|_{\x,\y\in\S^{d-1}}$. If the Fourier transform $\widehat{\Phi}$ is measurable on $\R^{d}$, then for $\ell\geq 2k+1$,it holds that 
	$$\widehat{\psi}(\ell)=\int_{0}^{\infty} t\widehat{\Phi}(t)J_{\nu}^2(t)\d t,~~\nu:=\ell+\frac{d-2}{2}.$$
\end{lemma}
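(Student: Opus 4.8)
The plan is to derive the identity from the Bochner/Fourier--inversion representation of the radial function $\Phi$ together with the Gegenbauer expansion of plane waves, reading off the spherical coefficient $\widehat{\phi}(\ell)$ by orthogonality. First I would write, for $\x,\y\in\S^d$,
$$\phi(\x\cdot\y)=\Phi(\x-\y)=\frac{1}{(2\pi)^{d+1}}\int_{\R^{d+1}}\widehat{\Phi}(\Omg)\,e^{\i\Omg\cdot(\x-\y)}\,\d\Omg,$$
where $\widehat{\Phi}$ is the generalized Fourier transform of the conditionally positive definite $\Phi$, a measurable function on $\R^{d+1}\setminus\{0\}$ that may blow up at the origin at a rate governed by $q$. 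Because $\widehat{\Phi}$ is radial, polar coordinates $\Omg=t\eta$ with $t=|\Omg|\ge 0$, $\eta\in\S^d$, turn this into
$$\phi(\x\cdot\y)=\frac{1}{(2\pi)^{d+1}}\int_0^\infty t^{d}\,\widehat{\Phi}(t)\Bigl(\int_{\S^d}e^{\i t\,\eta\cdot\x}\,e^{-\i t\,\eta\cdot\y}\,\d\sigma(\eta)\Bigr)\d t.$$

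Next I would expand each plane wave by the classical Gegenbauer--Bessel formula $e^{\i t\,\eta\cdot\x}=\Gamma(\lambda)\,(2/t)^{\lambda}\sum_{k\ge 0}(k+\lambda)\,\i^{k}J_{k+\lambda}(t)\,C_k^{\lambda}(\eta\cdot\x)$ with $\lambda=(d-1)/2$ (using the Jacobi--Anger expansion instead when $d=1$), multiply the two series, and integrate in $\eta$ over $\S^d$. Orthogonality of the Gegenbauer polynomials on the sphere --- equivalently, the addition theorem for spherical harmonics --- annihilates all cross terms $k\ne j$ and replaces $C_k^{\lambda}(\eta\cdot\x)C_k^{\lambda}(\eta\cdot\y)$ by a fixed multiple of $C_k^{\lambda}(\x\cdot\y)$. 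The double sum collapses to a single Gegenbauer expansion of $\phi$ whose $k$-th coefficient carries the factor $t^{-2\lambda}J_{k+\lambda}^{2}(t)=t^{-(d-1)}J_{k+\lambda}^{2}(t)$ under the radial integral; together with the Jacobian $t^{d}$ this produces exactly $t\,\d t$, so the $k$-th coefficient is a fixed multiple of $\int_0^\infty t\,\widehat{\Phi}(t)\,J_{k+\lambda}^{2}(t)\,\d t$. Matching against the normalization of the Fourier (Gegenbauer/Legendre) coefficients adopted in \cite{narcowich2007approximation} makes the accumulated constants $\Gamma(\lambda)^2$, $2^{2\lambda}$, $(k+\lambda)^2$, the surface area of $\S^d$ and $(2\pi)^{-(d+1)}$ cancel, leaving $\widehat{\phi}(\ell)=\int_0^\infty t\,\widehat{\Phi}(t)\,J_v^2(t)\,\d t$ with $v=\ell+(d-1)/2$.

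The step I expect to be the main obstacle is not this bookkeeping of constants but the analytic justification of the manipulations for a \emph{conditionally} positive definite kernel: the inversion formula above is a distributional identity, $\widehat{\Phi}$ is singular at the origin, and one must legitimately interchange the radial integral with the Gegenbauer series. This is precisely where the hypothesis $\ell\ge 2q+1$ is used. For such $\ell$ the degree-$\ell$ spherical component is insensitive to the polynomial ambiguity intrinsic to a conditionally positive definite function, and the weight $J_v^2(t)$, which vanishes like $t^{2v}$ at the origin, suppresses the singularity of $\widehat{\Phi}$ strongly enough to make the integral absolutely convergent. I would render the argument rigorous by first truncating $\widehat{\Phi}$ to an annulus $\varepsilon\le|\Omg|\le R$, where every interchange is elementary and the computation above is valid verbatim, and then letting $\varepsilon\to 0$ and $R\to\infty$, using the decay of $J_v$ together with the growth bounds on $\widehat{\Phi}$ to pass to the limit.
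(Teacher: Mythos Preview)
Your proposal is the standard and correct route to this identity: Fourier inversion, polar coordinates, the Gegenbauer--Bessel expansion of the plane wave, orthogonality/addition theorem to collapse the double series, and the use of $\ell\ge 2q+1$ together with $J_v^2(t)\sim t^{2v}$ near $0$ to absorb the singularity of the generalized Fourier transform of a conditionally positive definite kernel. This is essentially the argument in the cited source \cite{narcowich2007approximation}.

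Note, however, that the present paper does \emph{not} give its own proof of this lemma; it is stated with a direct citation to \cite[Proposition~3.1]{narcowich2007approximation} and used as a black box in the proof of Theorem~\ref{thm:Integral}. So there is no in-paper proof to compare against; your write-up simply supplies the argument that the paper defers to the reference.
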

% \begin{equation}\label{eq:ResGauss-SF2m}
	%     \psi_{2m+2}(\alpha)=\frac{1}{\sqrt{2\pi}c}\sum_{j=0}^{m}\frac{(-1)^j}{c^{2j}j!}{m+\frac{1}{2} \choose m-j}
	%     \left(2\sin^2(\frac{\alpha}{2})\right)^j\ e^{-\frac{2\sin^2(\frac{\alpha}{2})}{c^2}},    
	% \end{equation}
% which can be written as
% \begin{equation}\label{eq:ResGauss-SF2m2}
	%     \psi_{2m+2}(\alpha)=\frac{1}{\sqrt{2\pi}c}\sum_{j=0}^{m}\frac{(-1)^i}{i!}{m+\frac{1}{2} \choose m-i}
	%     \left(\frac{1-\cos\alpha}{c^2}\right)^i\ e^{-\frac{1-\cos\alpha}{c^2}}.    
	% \end{equation}

\section{High-order quasi-interpolation}
\label{sec:MainResult}
In this section, we will develop a high-order quasi-interpolation method using restricted generalized Gaussian kernels. We begin by constructing a tensor-product kernel over a torus derived from a generalized Gaussian kernel restricted to the circle, and then demonstrate that it satisfies high-order periodic Strang-Fix conditions. Next, we utilize this kernel to formulate the proposed periodic quasi-interpolation method.  We also develop a sparse grid quasi-interpolation scheme for high-dimensional function approximation to alleviate the curse of dimensionality. Finally,  we extend the quasi-interpolation scheme to non-periodic functions via the periodization technique discussed in \cite{gao2024quasi,potts2021approximation}.

\subsection{Generalized Gaussian kernels restricted  over a torus  $\mathbb{T}^d$}

 This study considers a two-dimensional generalized Gaussian kernel  defined as
\begin{equation}\label{eq:Gauss-SF2m}
	\Phi_c(\x, \y)=\phi_{2m+2}(r;c)=\frac{1}{\sqrt{2\pi}c}L_{m}^{(1/2)}\big(\frac{r^2}{2c^2}\big)\ e^{-r^2/(2c^2)}, ~\x, \y\in \mathbb{R}^ {2}, ~r=\|\x-\y\| ,
\end{equation}
where $m$ is a nonnegative integer and $c$ is a positive scale parameter. The function $L_{m}^{(\beta)}(x)$ denotes the generalized Laguerre polynomial in the form
\begin{equation}\label{eq:GeneralLaguerrePoly}
	L_{m}^{(\beta)}(x)=\sum_{k=0}^m(-1)^k \binom{m+\beta}{m-k}\frac{x^k}{k!}.
\end{equation}
Based on Gao and Wu \cite{gao2017constructing}, we can verify that $\Phi_c$ satisfies the generalized Strang-Fix condition of order $2m+2$. More importantly, we shall further verify that its restriction to a circle satisfies periodic Strang-Fix condition of the same order $2m+2$. We define $$\bar{\x}=\x/\|\x\|,~\bar{\y}=\y/\|\y\|,~\bar{r}=\|\bar{\x}-\bar{\y}\|,$$ 
and substitute them into \eqref{eq:Gauss-SF2m} yielding the formulation of the periodic function in the form
\begin{equation}\label{eq:ResGauss-SF2m}
	\psi_{2m+2}(\alpha;c)=\frac{1}{\sqrt{2\pi}c}L_{m}^{(1/2)}\Big(\frac{2\sin^2(\frac{\alpha}{2})}{c^2}\Big)\ e^{-2\sin^2(\frac{\alpha}{2})/c^2}, \alpha \in \mathbb{T},
\end{equation}
where $\alpha$ represents the  circular distance between $\bar{\x}$ and $\bar{\y}$ and $\sin^2(\frac{\alpha}{2})=\frac{\bar{r}^2}{4}$. Then, based on  Lemma \ref{lem:FourierCoeff}, we have the following theorem.
\begin{theorem}\label{thm:Integral}
	Let $\psi_{2m+2}(\alpha;c)$ be the restricted generalized Gaussian kernel defined as in  \eqref{eq:ResGauss-SF2m} and $b_0$ be a positive constant. Then we have:
	\begin{align*}
		&(i)~~|\phat_{2m+2}(\ell;c)-1|\leq b_0|\ell|^{2m+2}c^{2m+2},~\text{for fixed } \ell ~\text{and sufficiently small}~ c,\\
		&(ii)~~|\phat_{2m+2}(\ell;c)| ~\text{decays exponentially for sufficiently large} ~\ell.
	\end{align*}
\end{theorem}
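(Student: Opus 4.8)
The plan is to derive a closed form for the Fourier coefficient $\phat_{2m+2}(\ell;c)$ in terms of the modified Bessel function $I_\ell$, and then read off the two regimes from its known behaviour: the large-argument (Hankel) expansion for part $(i)$ and the large-order decay for part $(ii)$. First I would apply Lemma~\ref{lem:FourierCoeff} with $d=1$, so that $v=\ell$. Its hypotheses hold: by Lemma~\ref{lem:FourierTransform} the transform $\mathcal{F}_2\phi_{2m+2}(r;c)$ equals $\sqrt{2\pi}\,c\,e^{-c^2r^2/2}$ times a polynomial in $c^2r^2$ which, once the $L_j^{(0)}$-sum in \eqref{form} is rewritten in the monomial basis with the help of Lemma~\ref{lem:CombIden}, has strictly positive coefficients; hence $\mathcal{F}_2\phi_{2m+2}>0$, so $\phi_{2m+2}(\cdot;c)$ is positive definite on $\R^2$ by Bochner's theorem, and Lemma~\ref{lem:FourierCoeff} applies with $q=0$, giving for every $\ell\ge1$
\begin{equation*}
  \phat_{2m+2}(\ell;c)=\int_0^\infty t\,\mathcal{F}_2\phi_{2m+2}(t;c)\,J_\ell^2(t)\,\d t .
\end{equation*}

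Next I would evaluate this integral in closed form. Substituting \eqref{form}, summing the Laguerre polynomials by the generating function $\sum_{j\ge0}L_j^{(0)}(x)w^j=(1-w)^{-1}\exp(-xw/(1-w))$, and then applying the classical Weber--Hankel identity $\int_0^\infty e^{-Pt^2}J_\ell^2(t)\,t\,\d t=(2P)^{-1}e^{-1/(2P)}I_\ell(1/(2P))$, one arrives, with $z:=1/c^2$, at
\begin{equation*}
  \phat_{2m+2}(\ell;c)=\sum_{j=0}^m(-1)^j\binom{m+1/2}{m-j}\,[w^j]\bigl\{\sqrt{2\pi z}\;e^{-z}\,e^{wz}\,I_\ell((1-w)z)\bigr\},
\end{equation*}
where $[w^j]$ is the coefficient of $w^j$ in the (analytic) expansion about $w=0$; for $m=0$ this reads $\phat_2(\ell;c)=\sqrt{2\pi z}\,e^{-z}I_\ell(z)$.

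For part $(i)$ I would feed in the Hankel expansion $\sqrt{2\pi\zeta}\,e^{-\zeta}I_\ell(\zeta)\sim\sum_{\gamma\ge0}(-1)^\gamma a_\gamma(\ell)\,\zeta^{-\gamma}$ of Lemma~\ref{lem:HankelExpan} (cf.\ \eqref{eq:ModifiedBesselExpansion}) at $\zeta=(1-w)z$, truncated at order $m+1$. The prefactor $e^{-z}e^{wz}$ then cancels the leading exponential of $I_\ell$, and using $[w^j]\{(1-w)^{-\gamma-1/2}\}=(-1)^j\binom{-\gamma-1/2}{j}$ together with the Chu--Vandermonde convolution $\sum_j\binom{m+1/2}{m-j}\binom{-\gamma-1/2}{j}=\binom{m-\gamma}{m}$, the braced sum collapses and
\begin{equation*}
  \phat_{2m+2}(\ell;c)=\sum_{\gamma=0}^{m+1}(-1)^\gamma a_\gamma(\ell)\binom{m-\gamma}{m}z^{-\gamma}+\mathcal{O}(z^{-m-2}).
\end{equation*}
Because $\binom{m-\gamma}{m}=0$ for $\gamma=1,\dots,m$, all corrections of orders $c^2,\dots,c^{2m}$ vanish; the $\gamma=0$ term gives $a_0(\ell)=1$ and the $\gamma=m+1$ term (with $\binom{-1}{m}=(-1)^m$) gives $-a_{m+1}(\ell)c^{2m+2}$, where $a_{m+1}(\ell)=\frac{\prod_{i=0}^m((i+1/2)^2-\ell^2)}{(-2)^{m+1}(m+1)!}$ is a degree-$(m+1)$ polynomial in $\ell^2$ with leading term $\ell^{2m+2}/(2^{m+1}(m+1)!)$. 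This is exactly $(i)$, with $b_0$ depending only on $m$. For part $(ii)$ I would instead keep $c$ fixed and let $\ell\to\infty$ in the exact identity of the second paragraph: expanding the $w^j$-coefficients for $j\le m$ writes $\phat_{2m+2}(\ell;c)$ as $\sqrt{2\pi z}\,e^{-z}$ times a linear combination of $I_\ell(z),I_\ell'(z),\dots,I_\ell^{(m)}(z)$ with $\ell$-independent polynomial-in-$z$ coefficients, and by the ascending series $I_\nu(z)=\sum_{k\ge0}(z/2)^{2k+\nu}/(k!(k+\nu)!)$ together with $I_\ell'=\tfrac12(I_{\ell-1}+I_{\ell+1})$ each such Bessel value is, for $\ell$ large, at most $C(m,c)\,(1/(2c^2))^{\ell-m}/(\ell-m)!$, which by Stirling decays faster than any geometric rate --- in particular exponentially --- as $\ell\to\infty$.

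The crux is the cancellation in $(i)$: one must expand $I_\ell$ to order $m+1$ with the full $\ell$-dependence of the coefficients $a_\gamma(\ell)$ retained, and check that combining them against the precise Laguerre weights $(-1)^j\binom{m+1/2}{m-j}$ of \eqref{form} makes the orders $c^2,\dots,c^{2m}$ annihilate --- the identity $\sum_j(-1)^j\binom{m+1/2}{m-j}[w^j]\{(1-w)^{-\gamma-1/2}\}=\binom{m-\gamma}{m}$ --- which is precisely where the approximation order $2m+2$, rather than the naive order $2$, is produced. The remaining point, routine but worth stating, is the legitimacy of truncating the divergent Hankel series, evaluating it at the shifted argument $(1-w)z$, and extracting $w$-coefficients with an $\mathcal{O}(z^{-m-2})$ remainder: this is valid because the braced quantity is analytic in $w$ near $0$, so a Cauchy estimate on a small circle applies.
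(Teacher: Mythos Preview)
Your argument is correct and takes a genuinely different route from the paper.

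\textbf{What the paper does.} The paper first evaluates only the base case $\phat_2(\ell;\rho)=\sqrt{2\pi}\,\rho^{-1/2}e^{-1/\rho}I_\ell(1/\rho)$ and then proves the operator identity
\[
\phat_{2m+2}(\ell;\rho)=\sum_{j=0}^m\frac{(-\rho)^j}{j!}\,\frac{\partial^j}{\partial\rho^j}\phat_2(\ell;\rho),
\]
by differentiating under the integral, expanding via Leibniz, and matching coefficients against Lemma~\ref{lem:FourierTransform}; the coefficient match is precisely Lemma~\ref{lem:CombIden} with $z=1/2$. Part~(i) then follows from Lemma~\ref{lem:Polyn} applied to the Hankel polynomial expansion of $\phat_2$, and part~(ii) from the same identity together with $\partial_z^j I_\ell(z)=2^{-j}\sum_k\binom{j}{k}I_{2k-j+\ell}(z)$ and the large-order asymptotics $I_\ell(z)\sim(2\pi\ell)^{-1/2}(ez/(2\ell))^{\ell}$.

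\textbf{What you do instead.} You bypass the $\rho$-derivative identity entirely: you first verify (more carefully than the paper) that $\mathcal{F}_2\phi_{2m+2}>0$ so that Lemma~\ref{lem:FourierCoeff} applies with $q=0$, then use the Laguerre generating function plus Weber--Hankel to obtain a closed form as a finite $[w^j]$-extraction from $\sqrt{2\pi z}\,e^{-(1-w)z}I_\ell((1-w)z)$. Feeding in the Hankel expansion at the shifted argument and collapsing with Chu--Vandermonde yields $\binom{m-\gamma}{m}$ directly, which kills orders $1\le\gamma\le m$ and isolates the exact leading correction $-a_{m+1}(\ell)\,c^{2m+2}$.

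\textbf{Trade-offs.} The paper's route explains structurally why the construction \eqref{eq:Gauss-SF2m} achieves order $2m+2$: Lemma~\ref{lem:Polyn} is the design principle, and the displayed identity links $\phat_{2m+2}$ to it. Your route is more self-contained (no Lemma~\ref{lem:Polyn}), gives the precise next-order coefficient with its $\ell$-dependence, and as a byproduct establishes positive definiteness of $\phi_{2m+2}$ on $\R^2$, which the paper uses implicitly. The one analytical point you must keep --- and you flag it --- is controlling the Hankel remainder uniformly on a small $w$-circle so that the $[w^j]$-extraction commutes with truncation; since the theorem fixes $\ell$ and sends $c\to0$, a Cauchy bound on $|w|=1/2$ suffices.
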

\begin{proof}
    See \ref{Appendix:A} for the detailed proof.
\end{proof}

From above restricted Gaussian kernel $\psi_{2m+2}(\alpha;c)$ defined over a circle, we go further with constructing an anisotropic  multivariate kernel $\bpsi_{\h, \s}$ defined over a torus in  the tensor-product form  
\begin{equation}\label{eq:IsoTPKer}
	\bpsi_{\h,\s}(\x;\c)=\prod_{r=1}^d\psi_{h_r,s_r}(x_r;c_r),
\end{equation}
where $\h=(h_1,\cdots,h_d)$, $\s=(s_1,\cdots, s_d)$, $\c=(c_1,\cdots,c_d)$, and $$\psi_{h_r,s_r}(x_r;c_r)=\psi_{s_r}(x_r;c_r)\cdot h_r,\quad h_r=\frac{1}{N_r}.$$ Here $\psi_{s_r}(x_r;c_r)$ being defined in \eqref{eq:ResGauss-SF2m} with $s_r=2 m_r+2$ for some positive integers $N_r$ and $m_r$. 
Moreover, we shall show that $\bpsi_{\h,\s}$ satisfies periodic Strang-Fix conditions. For clarity of exposition, we only prove the isotropic case with $N_r=N$, $m_r=m$, $c_r=c=\mathcal{O}{(1/N)}$, $h_r =1/N$, $s_r=s$, for all $r=1,2,\cdots, d$.  All results extend naturally to anisotropic configurations.

\begin{theorem}\label{PSoPS}
	The isotropic counterpart of the kernel $\bpsi_{\h,\s}$  defined  in Equation \eqref{eq:IsoTPKer}  satisfies  periodic Strang-Fix conditions in Definition \ref{def:PSF} with $s=2m+2$.
\end{theorem}
\begin{proof}
	We begin with showing that $\bpsi_{\h,\s}$ satisfies the first part  of Definition \eqref{def:PSF}. According to the definition $\psi_{h,s}(x_r;c)=\psi_{s}(x_r;c)/N$,
	we have
    $$
		\big|1-N^d\widehat{\bpsi}_{\h,\s}(\k;\c)\big|=\big|1-N^d \prod_{r=1}^d\phat_{s}(k_r;c)/N^d\big|=\big|1-\prod_{r=1}^d\phat_{s}(k_r;c)\big|.
	$$
    Before proceeding, we need the following property: assuming that $a_r$ ($r=1,2,\ldots,d$) are uniformly bounded,  then there exists a constant $C>0$ such that
    $$|1-\prod_{r=1}^d a_r|=\big|\sum_{r=1}^d(1-a_r)\cdot a_{r+1}\cdots a_d\big|\leq C\sum_{r=1}^d|1-a_r|.$$
In particular, by letting $a_r=\phat_{s}(k_r;c)$, we can apply  the above inequality and  Theorem \ref{thm:Integral} to obtain
	\begin{align*}
		\big|1-\prod_{r=1}^d\phat_{s}(k_r;c)\big|\leq C\sum_{r=1}^d|1-\phat_{s}(k_r;c)|\leq Cb_0N^{-s} \sum_{r=1}^d|k_r|^s
		\leq   b_{\bm{0}}|\k|_2^sN^{-s}, \quad b_{\bm{0}}=Cb_{0}.
	\end{align*}
    
We go further with showing that $\bpsi_{\h,\s}$ satisfies the second part  of Definition \eqref{def:PSF}. Observe that
$$
|N^d\widehat{\bpsi}_{\h,\s}(\k+\bnu N; \c)| =|\prod_{r= 1}^d\phat_{s}(k_r+\nu_rN;c)|\leq\prod_{r= 1}^d|\phat_{s}(k_r+\nu_rN;c)|. 
$$
Since   $\widehat{\psi}_s(\ell;c)$ decays exponentially for large $\ell$, there exists  non-negative constants $\{b_{\nu_r}\}_{r=1}^d$ satisfying
\begin{equation*}
\label{eq:B_v}
|| (1+|\bnu|_2^2)^{\tau /2}b_{\nu_r}||_{\ell_q(\mathbb{Z}^d)} < \infty,
\end{equation*}
such that the following inequalities hold: 
    $$|\phat_{s}(k_r+\nu_r N;c)|\leq b_{\nu_r}|k_r|^{s}N^{-s-\tau }, \ r=1,\cdots, d.$$
    Namely, the second part of periodic Strang-Fix conditions (see  \eqref{eq:new_PSF_2}). 
    Moreover,  the first part of  Theorem \ref{thm:Integral} together with $|k_r|\le N/2$ yields 
    \begin{equation*}
    \label{eq:kernel_bound}
    |\phat_{s}(k_r;c)| \le 1+b_0|k_r|^sN^{-s}\le 1+b_0/2^{s}. \ 
    \end{equation*}
By setting $|\phat_{s}(k_{r^*}+\nu_{r^*} N;c)|=\max\limits_{1\leq r \leq d}\{|\phat_{s}(k_r+\nu_r N;c)| \}$, we have
\begin{align*}	
	|N^d\widehat{\bpsi}_{\h,\s}(\k+\bnu N; \c)|
	 &~\leq |\phat_{s}(k_{r^*}+\nu_{r^*} N;c)|\prod_{j\neq r^*}|\phat_{s}(k_j;c)|\\
        &~\leq  b_{\nu_{r^*}}|k_{r^*}|^sN^{-s-\tau} (1+b_0/2^{s})^{d-1}\\
         &~\leq  b_{\bnu}|\k|_2^sN^{-s-\tau},  \ \ 
	\end{align*}
where $b_{\bnu}=b_{\nu_{r^*}}(1+b_0/2^{s})^{d-1}$. 
Hence, the theorem holds with
\begin{equation*}
|| (1+|\bnu|_2^2)^{\tau /2}b_{\bnu}||_{\ell_q(\mathbb{Z}^d)}=(1+b_0/2^s)^{d-1}|| (1+|\bnu|_2^2)^{\tau /2}b_{\nu_{r^*}}||_{\ell_q(\mathbb{Z}^{d})} < \infty.
\end{equation*}
This completes the proof.
\end{proof}

Theorem \ref{PSoPS} demonstrates that the restricted generalized Gaussian kernel in tensor-product form satisfying the same order periodic Strang-Fix conditions as the one of generalized Strang-Fix conditions satisfied by generalized Gaussian kernel $\phi_{2m+2}$.
Consequently, we can apply it to construct quasi-interpolation over a torus in the following subsection.

\subsection{Quasi-interpolation  for periodic function approximation over a torus} $\mathbb{T}^d$
We first consider the data sampled at uniform grid over a torus and then extend it to   directly uniform  gird   as well.  Given the discrete function values $\{f(\x_{\j})\}_{\j\in\JN}$ sampled from a $2\pi$-periodic continuous function $f$ at the equidistant grid set $\TN:=\{\x_{\j}=\frac{2\pi \j}{N},~\j\in\J_N\}$ over $\T^d$, we first  construct an ansatz
\begin{equation}\label{eq:quasiInterp}
	Q_{\h,\s}f(\x) =  \sum_{\j\in\JN} f(\x_{\j})\bpsi_{\h,\s}(\x-\x_{\j};\c).
\end{equation}
This quasi-interpolation scheme is computationally efficient, requiring only linear combinations of data values and shifted kernels. Unlike traditional quasi-interpolation methods \cite{ jeong2021approximation,wu1994shape}, it inherently avoids the boundary problem due to the periodic setting.  We go further with deriving its error estimates. 
\begin{theorem}\label{maintheorem}
Let $\bpsi_{\h,\s}$ be an  isotropic counterpart of the kernel defined in \eqref{eq:IsoTPKer} that satisfies the periodic Strang-Fix conditions of order $s>d/2$, as specified in Definition \ref{def:PSF}. Then, under the assumptions of  Lemma  \ref{lem:ConvCardInterp} together with $\sigma=\min\{\mu,s+\tau\}$, there exists a positive constant $C$ such that
$$\|f-Q_{\h,\s}f\|_{A_q^{\tau}(\Td)}\leq CN^{-(\sigma-\tau)}~\|f\|_{A_q^{\mu}(\Td)}$$
holds for any $f \in A_q^{\mu}(\mathbb{T}^d)$.
\end{theorem}

\begin{proof}
We begin by decomposing the approximation error via the triangle inequality
\begin{align}\label{eq:interpolationError1}
   \|f-Q_{\h,\s}f\|_{A_q^\tau(\T^d)}
    \leq \|f-I_Nf\|_{A_q^\tau(\T^d)}+\|Q_{\h,\s}f-I_Nf\|_{A_q^\tau(\T^d)}.
\end{align}
Here $I_Nf$ denotes the Lagrange interpolation operator from \eqref{eq:LagInterp} with $\chi_N$ satisfying periodic Strang-Fix condition of order $s$. Note that the interpolation error $\|f-I_Nf\|_{A_q^\tau(\T^d)}$ has been given in Lemma \ref{lem:ConvCardInterp} as
\begin{equation}
    \label{eq:error_first}
\|f-I_Nf\|_{A_q^\tau(\T^d)} \le C N^{-(\sigma-\tau)}\|f\|_{A_q^\mu(\T^d)}.
\end{equation}
We only need to derive the bound of the second term on the right-hand side of \eqref{eq:interpolationError1}.
Applying Fourier series expansion to $\bpsi_{\h,\s}  (\x-\x_{\j})$ and $\chi_N(\x-\x_{\j})$, we have 
\begin{equation*}
\begin{split}
Q_{\h,\s}f(\x)-I_Nf(\x)
&=\sum_{\bm{j}\in \JN}f(\x_{\j})\sumkzd(\widehat{\bpsi}_{\h,\s}(\k)-\chat(\k))e^{i\k\cdot(\x-\x_{\j})}\\
		&=\sumkzd \big(\widehat{\bpsi}_{\h,\s}(\k)-\chat(\k)\big)\eikx\Big(\sum_{\bm{j}\in \JN}f(\x_{\j}) e^{-i\k\cdot\x_{\j}}\Big).
        \end{split}
\end{equation*}
This together with  the discrete Fourier coefficients $\ftil_{\k}$ given in \eqref{eq:discrete_fourier_coefficien} leads to
\begin{equation*}
\begin{split}
    	Q_{\h,\s}f(\x)-I_Nf(\x)
%&=\sumkzd\big(N^d\ftil_{\k}\widehat{\bpsi}_{\h,\s}(\k)-N^d\ftil_{\k}\chat(\k)\big)\eikx\\
		&=\sumkzd \ftil_{\k}\big(N^d\widehat{\bpsi}_{\h,\s}(\k)-N^d\chat(\k)\big) \eikx.
        \end{split}
\end{equation*}
To distinguish between contributions from the fundamental frequency band and those from aliased modes, we write $\k=\k_0+\bnu N$, where $\k_0\in\JN$  and $\bnu\in\Zd$. This in turn yields two distinct sums: one for $\bnu=\bm{0}$ and the other  for $\bnu\neq \bm{0}$. In particular, we have
\begin{equation*}
	\begin{aligned}
Q_{\h,\s}f(\x)-I_Nf(\x)&=\sum_{\k_0\in \JN}\ftil_{\k_0}\big(N^d\widehat{\bpsi}_{\h,\s}(\k_0)-N^d\chat(\k_0)\big) e^{i\k_0 \cdot \x}\\
		&\!\!\!\!\!+\sum_{\k_0\in \JN}\sum_{\bnu\in\Z^d\backslash \{\bm{0}\}}\ftil_{\k_0+\bnu N}\big(N^d\widehat{\bpsi}_{\h,\s}(\k_0+\bnu N)-N^d\chat(\k_0+\bnu N)\big) e^{\i(\k_0+\bnu N)\cdot\x}.
	\end{aligned} 
\end{equation*} 
We first consider $1\leq q<\infty$. 
Using the definition of the norm $\|\cdot\|_{A_q^{\tau}(\mathbb{T}^d)}$ from \eqref{eq:AqalpSpace}, we can get
\begin{equation}\label{eq:normDefnition}
	\begin{aligned}
		\|Q_{\h,\s}f&-I_Nf\|_{A_q^{\tau}(\mathbb{T}^d)}^q
=\sum_{\k_0\in \JN}\Big(\big|\ftil_{\k_0}\big(N^d\widehat{\bpsi}_{\h,\s}(\k_0)-N^d\chat(\k_0)\big)(1+|\k_0|_2^2)^{\tau /2})\big|^q\\
+&\sum_{\bnu\in\Z^d\backslash \{\bm{0}\}}\Big|\ftil_{\k_0+\bnu N}\big(N^d\widehat{\bpsi}_{\h,\s}(\k_0+\bnu N)-N^d\chat(\k_0+\bnu N)\big)(1+|\k_0+\bnu N|_2^2)^{\tau /2}\Big|^q
		\Big).
	\end{aligned}
\end{equation}
Then we apply periodic Strang-Fix conditions in Definition \ref{def:PSF} and Theorem \ref{PSoPS} to get the following bounds:
\begin{align*}
	|N^d\widehat{\bpsi}_{\h,\s}(\k_0)-N^d\chat(\k_0)| \leq|1-N^d\widehat{\bpsi}_{\h,\s}(\k_0)| + |1-N^d\chat(\k_0)|
    \leq  b_{\bm{0}}|\k_0|_2^s N^{-s},
\end{align*}
and
\begin{align*}
	|N^d\widehat{\bpsi}_{\h,\s}(\k_0+\bnu N)-N^d\chat(\k_0+\bnu N)|\leq b_{\bnu}|\k_0|_2^s N^{-s-\tau},~~\bnu\in\Z^d\backslash \{\bm{0}\}
    \end{align*}
with   
\begin{align}
    \label{eq:gamma_1}
\gamma_1:=|| (1+|\bnu|_2^2)^{\tau /2}b_{\bnu}||_{\ell_q(\mathbb{Z}^d)} < \infty.\end{align}
Substituting the above inequalities into 
\eqref{eq:normDefnition} and using the aliasing formula \eqref{eq:aliasing_formula}, we can obtain
\begin{equation*}
	\begin{aligned}
		\|Q_{\h,\s}f-I_Nf\|_{A_q^\tau(\mathbb{T}^d)}^q
		\leq N^{-(\sigma-\tau)q}\Big(b_{\bm{0}}^q\mathcal{E}_1
+\mathcal{E}_2\Big),
	\end{aligned}
\end{equation*}
where we define 
\begin{align}
    &\mathcal{E}_1:=N^{\sigma q -\tau q 
 -sq}\sum_{\k_0\in \JN}|\k_0|_2^{sq}\cdot  (1+|\k_0|_2^2)^{\tau q/2}\cdot\Big|\sum_{\bl\in\Zd}\fhat(\k_0+\bl N)\Big|^q,\label{eq:E_1}\\
 &\mathcal{E}_2:=N^{\sigma q-2\tau q-sq}\sum_{\k_0\in \JN}\sum_{\bnu\in\Z^d\backslash \{\bm{0}\}} b_{\bnu}^q  |\k_0|_2^{sq}\cdot(1+|\k_0+\bnu N|^2_2)^{\tau q/2}\cdot\Big|\sum_{\bl\in\Zd} \fhat(\k_0+\bnu N+\bl N)\Big|^q\label{eq:E_2}.
\end{align}
Next, we demonstrate that the two terms $\mathcal{E}_1$ and $\mathcal{E}_2$ can be both bounded by $\|f\|_{A_q^{\mu}(\T^d)}$ (see Lemma \ref{E_1}). Consequently, we have 
\begin{equation}
\label{eq:error_second}
   \|Q_{\h,\s}f-I_Nf\|_{A_q^\tau(\mathbb{T}^d)} \le   b_{\bm{0}}\gamma_2 \gamma_3  N^{-(\sigma-\tau) }\ \|f\|_{A_q^{\mu}(\mathbb{T}^d)}+C\gamma_1 \gamma_2 \gamma_3 N^{-(\sigma-\tau) } \|f\|_{A_q^{\mu}(\mathbb{T}^d)}.
\end{equation}
Finally, combining \eqref{eq:error_first}  with  \eqref{eq:error_second},
we arrive at the final error estimate.
The case $q=\infty$ can be derived similarly. 
\end{proof}

We go further with extending the above results to  the directly uniform grid by constructing a quasi-interpolant
\begin{equation}\label{eq:quasiInterpMixed}
	\begin{aligned}
		Q_{\h,\s}^{\mathrm{mix}}f(\x)=&~Q_{h_1,s_1}Q_{h_2,s_2}\cdots Q_{h_d,s_d}f(\x) \\
		=  &~\sum_{j_1}\sum_{j_2}\cdots\sum_{j_d} f(x_{1,j_1},\ldots,x_{d,j_d})\prod_{r=1}^d\psi_{h_r,s_r}(x_r-x_{r, j_r};c_r),
	\end{aligned}
\end{equation}
with the kernel $\psi_{h_r,s_r}(x_r;c_r)=\psi_{s_r}(x_r;c_r)\cdot h_r$ and $h_r=\frac{1}{N_r}$. Similarly, we can get the following error estimate.

\begin{theorem}\label{th:mix}
Let the quasi-interpolation $Q_{\h,\s}^{\mathrm{mix}}f$ be defined by \eqref{eq:quasiInterpMixed}, where the kernel $\psi_{h_r,s_r}(x_r;c_r)$ satisfying the periodic Strang-Fix conditions of order $s_r$, $r=1,\ldots,d$, as specified in Definition \ref{def:PSF}. Then, under the assumptions of Lemma \ref{lem:ConvCardInterp}, there exists a positive constant $C$ such that the inequality
$$\|f-Q_{\h,\s}^{\mathrm{mix}}f\|_{A_{q,\mathrm{mix}}^{\tau}(\Td)}\leq C\|f\|_{A_{q,\mathrm{mix}}^{\mu}(\Td)}\sum_{r=1}^dN_r^{-(\sigma_r-\tau)}$$ holds true for any  $f\in A_{q,\mathrm{mix}}^{\mu}$ with $\sigma_r=\min\{\mu,s_r+\tau\}$.
\end{theorem}
\begin{proof}
We first rewrite the error as
\begin{equation*}
	\begin{aligned}
		&f(\x)-Q_{\h,\s}^{\mathrm{mix}}f(\x)\\
		=&(I-Q_{h_1,s_1}Q_{h_2,s_2}\cdots Q_{h_d,s_d})f(\x)\\
		=&\sum_{r=1}^d(I-Q_{h_r,s_r})Q_{h_{r+1},s_{r+1}}\cdots Q_{h_d,s_d}f(\x).
	\end{aligned}
\end{equation*}
Using the triangle inequality and  Theorem \ref{maintheorem}, we have 
\begin{equation}\label{eq:mixnorm_err1}
\begin{aligned}
	\|f-Q_{\h,\s}^{\mathrm{mix}}f\|_{A_{q,\mathrm{mix}}^{\tau}(\Td)}
    =&~ \big\|\sum_{r=1}^d(I-Q_{h_r,s_r})Q_{h_{r+1},s_{r+1}}\cdots Q_{s_d,h_d}f\big\|_{A_{q,\mathrm{mix}}^{\tau}(\Td)}\\
    \leq &~ \sum_{r=1}^d\|(I-Q_{h_r,s_r})Q_{h_{r+1},s_{r+1}}\cdots Q_{s_d,h_d}f\big\|_{A_{q,\mathrm{mix}}^{\tau}(\Td)}\\
	\leq &~\sum_{r=1}^d C_rN_r^{-(\sigma_r-\tau)}\|Q_{h_{r+1},s_{r+1}}\cdots Q_{h_d,s_d}f\|_{A_{q,\mathrm{mix}}^{\mu}(\Td)}.
    \end{aligned}
\end{equation}
Moreover,  note that
\begin{align*}
\|Q_{h_j,s_j}f\|_{A_{q,\mathrm{mix}}^{\mu}(\Td)}
\leq &~\|Q_{h_j,s_j}f-f\|_{A_{q,\mathrm{mix}}^{\mu}(\Td)}+\|f\|_{A_{q,\mix}^{\mu}(\Td)}\\
\leq &~ C_j\|f\|_{A_{q,\mix}^{\mu}(\Td)}+\|f\|_{A_{q,\mix}^{\mu}(\Td)},\\
\leq &~ (1+C_j)\|f\|_{A_{q,\mix}^{\mu}(\Td)}     ,~j=1,\ldots,d.
\end{align*}
Using the mathematical induction,  we can prove 
$$
\|Q_{h_{r+1},s_{r+1}}\cdots Q_{h_d,s_d}f\|_{A_{q,\mathrm{mix}}^{\mu}(\Td)} \leq C \|f\|_{A_{q,\mathrm{mix}}^{\mu}(\Td)}.
$$
This together with \eqref{eq:mixnorm_err1} gives the desired result.
\end{proof}

Sparse grids have been widely used for high-dimensional function approximation to alleviate the curse of dimension \cite{gao2024quasi,griebel2005sparse,jeong2021approximation}.  We end this subsection  with constructing a sparse grid counterpart of quasi-interpolation  $Q_{\h,\s}^{\mathrm{mix}}$ defined in \eqref{eq:quasiInterpMixed}.

Let $\bm{n}=(n_1,\ldots,n_d)\in\mathbb{N}^d$ be a $d$-tupe with length $|\bm{n}|=n_1+\ldots+n_d$ and $\W_{\bm{N}}$ be a point set consisting of the directionally uniform points with the mesh size 
$$\bm{h}_{\bm{n}}:=2^{-\bm{n}}=\big(2^{-n_1},\ldots,2^{-n_d}\big).$$
Applying the union operator to point sets $\W_{\bm{n}}$, we can construct  the sparse grids $\W_{l,d}$ at level $l$ in the form 
\begin{equation*}
\W_{l,d}:=\bigcup_{|\bm{n}|=l+d-1}\W_{\bm{n}}.
\end{equation*}
Besides, we can derive the number of points over the sparse grids $\W_{l,d}$ as
\begin{equation*}
|\W_{l,d}|=(-1)^{d-1}\sum_{j=0}^{d-1}(-1)^j\binom{d-1}{j}\sum_{|\bm{n}|=l+j}|\W_{\bm{n}}|,
\end{equation*}
which is much less than the one of full grids or directly uniform grids.
Suppose  we have discrete function values sampled at the above sparse grids at hand, then following \cite{gao2024quasi},  we can construct a corresponding sparse grid quasi-interpolation
\begin{equation}\label{sparse} 
Q_{l,d}f(\x) = (-1)^{d-1}\sum_{j=0}^{d-1}(-1)^j\binom{d-1}{j}\sum_{|\bm{n}|=l+j}Q_{\h,\s}^{\mathrm{mix}}f(\x)
\end{equation}
with $Q_{\h,\s}^{\mathrm{mix}}f$ being defined in  \eqref{eq:quasiInterpMixed}, that is,
\begin{equation*}
Q_{\h,\s}^{\mathrm{mix}}f(\x)=\sum_{\x_{\bm{n},\k}\in\W_{\bm{n}}}f(\x_{\bm{n},\k})\bpsi_{\h_{\bm{n}},\s}(\x-\x_{\bm{n},\k};\c).
\end{equation*}
Furthermore, based on similar analysis provided in  reference \cite{hubbert2023convergence}, we can  expect   a convergence order $\mathcal{O}(2^{-ls^*}l^{d-1})$  of our sparse grid quasi-interpolation $Q_{l,d}f$, where $s^*=\min_{1\le r\le d}\{\sigma_r-\tau\}$.

Up to now,  we have constructed a quasi-interpolation scheme based on generalized Gaussian kernels such that it provides the highest achievable approximation order for periodic function approximation over a torus.  However,  the methodology can be easily extended  to non-periodic function approximation over a cube by using the periodization technique \cite{nasdala2021efficient} that has been recently studied in  \cite{gao2024quasi}. 
\subsection{Quasi-interpolation  for non-periodic function approximation over a cube}

 The construction process consists of three steps.  We first transfer a non-periodic target function $g$  defined over a cube  to a periodic counterpart  $f$ defined over a torus via the torus-to-cube transformation \cite{nasdala2021efficient}.  Then we apply the above constructed quasi-interpolation $Q_{\h,\s}f$ defined in \eqref{eq:quasiInterp} to $f$. Finally,  we get the resulting quasi-interpolation $Q_{\h,\s}g$  for approximating $g$ from $Q_{\h,\s}f$ with the help of the density function defined in   reference \cite{nasdala2021efficient} (see Formula \eqref{densityfunction} as an example).  Similarly, we can get a sparse grid counterpart  $Q_{l,d} g$  from  $Q_{l,d}f$ defined in \eqref{sparse}. 
 
Following the periodization technique   \cite{nasdala2021efficient},  given any point $\x=(x_1,\cdots,x_d)$ over $\mathbb{T}^d$, we first transform it to the cube $[-\pi,\pi]^d$ as $\bm{\gamma}(\x)=(\gamma_1(x_1),\cdots,\gamma_d(x_d))$ by torus-to-cube transformation $\bm{\gamma}$.
Furthermore, with the help of $\bm{\gamma}$, we transform a non-periodic target function $g$ defined over a cube to a periodic counterpart $f$ defined over  $\mathbb{T}^d$ as 
\begin{equation}\label{periodization}
f(\x)=g(\gamma_1(x_1),\cdots, \gamma_d(x_d))\prod_{r=1}^d\sqrt{\omega_r(\gamma_r(x_r))\gamma^{'}_r(x_r)}, 
\end{equation}
where the weight function and derivative of $\gamma_r(x_r)$ are defined by $\omega_r(\gamma_r(x_r))$ and  $\gamma^{'}_r(x_r)$, respectively.  Then, given a set of  grid points $\{\x_{\bold {j}}\}$ over $\T^d$,  we evaluate $\{f(\x_{\bold {j}})\}$ using Formula \eqref {periodization}. Based on these sampling data, we construct a quasi-interpolation scheme   $Q_{\h,\s}f$ defined in \eqref{eq:quasiInterp} for $f$.  This in turn leads to a  quai-interpolation scheme $Q_{\h,\s}g$ for $g$ defined  over a cube as
\begin{equation}\label{np}
Q_{\h,\s}g(\y)=\sum_{\j \in\JN }f(\x_{\j})\sqrt{\frac{\boldsymbol{\varrho}(\y)}{\bm{\omega}(\bm{y})}}\bpsi_{\h,\s}(\boldsymbol{\gamma}^{-1}(\y)-\x_{\j};\c).
\end{equation}
 Here the corresponding inverse transformation 
$
\boldsymbol{\gamma}^{-1}(\y)=(\gamma_1^{-1}(y_1),\cdots,\gamma_d^{-1}(y_d)): [-\pi, \pi]^d \rightarrow \mathbb{T}^d 
$ 
is defined in the sense of $\x=\boldsymbol{\gamma}^{-1}(\y) \in \mathbb{T}^d $ if and only if $\y=\boldsymbol{\gamma}(\x) \in [-\pi, \pi]^d $. The weight function is then defined as $\bm{\omega}(\bm{y})=\prod_{r=1}^d \omega_r(\gamma_r(x_r))$.  The non-negative $L_1$-integrable density function $\boldsymbol{\varrho}$ of $\boldsymbol{\gamma}$  is defined via
\begin{equation}\label{densityfunction}
\boldsymbol{\varrho}(\y)=\prod_{r=1}^d \varrho_r(y_r), \ \varrho_r(y_r):=\frac{1}{\gamma_r^{'}(\gamma_r^{-1}(y_r))}.
\end{equation}
Moreover, based on  Theorem $3.4$ in \cite{gao2024quasi}, we have corresponding error estimates in terms of  weighted $L_{\infty}$-norm 
$$
||Q_{\h,\s}g-g||_{L_{\infty}\left( [-\pi,  \pi]^d,\sqrt{\frac{\bm{\omega}}{\boldsymbol{\varrho}}}\right)}=||Q_{\h,\s}f-f||_\infty,
$$
where we define 
$$||Q_{\h,\s}g-g||_{L_{\infty}\left( [-\pi,  \pi]^d,\sqrt{\frac{\bm{\omega}}{\boldsymbol{\varrho}}}\right)}=\inf\big\{C:\ \big|(Q_{\h,\s}g(\bm{y})-g(\bm{y}))\sqrt{\bm{\omega}(\bm{y})/\boldsymbol{\varrho(\bm{y})}}\big|\leq C \ \text{for almost all} \ \bm{y}\big\}.$$
Similarly, we derive   $Q_{l,d}g$ based on sparse grids quasi-interpolation $Q_{l,d}f$ defined over a torus and the corresponding weighted $L_{\infty}$-norm  error estimates.  For more details, we refer readers to Subsection $3.2$ in reference \cite{gao2024quasi}. 
However, in such a case, $Q_{l,d}g$ is not sparse grids quasi-interpolation defined over a cube.  How to construct sparse grids quasi-interpolation for a nonperiodic function approximation defined over  a cube  will require more involved techniques that we shall study in the future works.
\section{Numerical examples}\label{sec:NumerSimul}
This section performs some  numerical simulations of applying our   quasi-interpolation based on restricted generalized  Gaussian kernels $\psi_{2m+2}$ and its tensor-product form in approximating periodic and non-periodic functions. Based on Theorem \ref{thm:Integral}, it is easy to verify that $\psi_{2m+2}$ satisfies   periodic Strang-Fix conditions in Definition \ref{def:PSF} with $s=2m+2$.

\textbf{Example 1.}
We consider finitely smooth periodic functions  \cite[Sec. $4$]{griebel2014fast} over $\mathbb{T}^d$,
$$F_p:\T^d\rightarrow\R: \bAlp\longmapsto\bigotimes_{r=1}^d f_p(\alpha_r),$$
where
$$f_p(\alpha_r)=\lambda_p(2+\text{sgn}(\alpha_r)\cdot \sin^p(\alpha_r))$$
with $\lambda_p$ denoting a normalization constant such that $\|f_p\|_{L_2(\T)}=1$. Note that for $\epsilon>0$, we have $f_p\in  H^{\frac{1}{2}+p-\epsilon}(\T)$ and thus $F_p\in H^{\frac{1}{2}+p-\epsilon}(\T^d)$.  In the following numerical simulations,  we set
$p=6 \geq s$ to ensure the approximation is sufficiently smooth, thereby allowing us to achieve the
expected approximation order for our quasi-interpolation.

We first approximate  the one-dimensional function  $f_6(\alpha_1)$ using our periodic quasi-interpolant with restricted generalized Gaussian kernel  $\psi_{2m+2}$.  We consider  $L_{\infty}$-norm approximation errors and corresponding convergence rates.  Numerical results under $m=0,1, 2$ (corresponding to $s=2,4,6$), are presented in Table \ref{tab:1D_FuncAppr_gauss}. It illustrates that convergence rates are consistent with the orders of periodic Strang-Fix conditions. Moreover, we also approximate a two-dimensional function $F_6(\T^2)$ on regular tensor grids and present corresponding approximation errors in Figure \ref{fig.L2conv_p=6}. It again validates the theoretical analysis. 
To evaluate the approximation errors on sparse grids, we approximate the two-dimensional function using the proposed method on sparse grids for levels $l = 7, \ldots, 11$. Corresponding posteriori error estimates are  shown in Table \ref{tab:2D_FuncAppr_gauss_periodic}. We can find that the sparse grids  quasi-interpolation can provide the same convergence rates as its full grid counterpart. 

\begin{table}[ht]
\centering
%\captionsetup{labelsep=period,labelfont=bf}
\caption{$L_{\infty}$-norm approximation errors and convergence rates.}
\label{tab:1D_FuncAppr_gauss}
\begin{tabular}{ccccccc}
\toprule
 & \multicolumn{2}{c}{$m=0$} & \multicolumn{2}{c}{$m=1$} & \multicolumn{2}{c}{$m=2$} \\
\cmidrule(lr){2-3} \cmidrule(lr){4-5} \cmidrule(lr){6-7}
$N$ & error & rate & error & rate & error & rate \\
\specialrule{1.5pt}{0pt}{0pt}
$64$   & 3.625e-03 &      & 3.201e-05 &      & 2.508e-06 &      \\
$128$  & 9.042e-04 & 2.03 & 1.957e-06 & 4.08 & 3.567e-08 & 6.20 \\
$256$  & 2.259e-04 & 2.02 & 1.217e-07 & 4.05 & 5.450e-10 & 6.14 \\
$512$  & 5.647e-05 & 2.01 & 7.594e-09 & 4.04 & 8.470e-12 & 6.10 \\
$1024$ & 1.412e-05 & 2.01 & 4.742e-10 & 4.03 & 1.517e-13 & 6.03 \\
\bottomrule
\end{tabular}
\end{table}

\begin{table}[ht]
\centering
\caption{$L_{\infty}$-norm approximation errors and convergence rates on sparse grids.}  
\label{tab:2D_FuncAppr_gauss_periodic}
\begin{tabular}{ccccccccc}
\toprule
 & \multicolumn{2}{c}{$m=0$}  & \multicolumn{2}{c}{$m=1$} & \multicolumn{2}{c}{$m=2$}  \\
\cmidrule(lr){2-3} \cmidrule(lr){4-5} \cmidrule(lr){6-7}
$l$ & error & rate & error & rate & error & rate \\
\specialrule{1.5pt}{0pt}{0pt}
$7$  & 1.806e-01 &      & 4.490e-03 &      & 9.530e-03 &      \\
$8$  & 5.811e-02 & 1.44 & 2.053e-03 & 0.99 & 1.092e-03 & 2.75 \\
$9$  & 8.064e-03 & 1.98 & 1.105e-04 & 2.36 & 6.689e-07 & 6.09 \\
$10$ & 4.121e-04 & 2.59 & 3.395e-07 & 4.02 & 4.702e-09 & 6.53 \\
$11$ & 1.804e-04 & 2.41 & 1.647e-08 & 4.34 & 1.282e-10 & 6.26 \\
\bottomrule
\end{tabular}
\end{table}

\begin{figure}[htbp]
\centering
\vspace{1cm}
\begin{tikzpicture}[scale=0.8]
	\begin{loglogaxis}[
		grid= both,
		major grid style={line width=1pt,draw=gray!50},
		grid style = dotted,
		mark size = 3pt,
		%log ticks with fixed point,
		%ytick distance = 0.001,
		xmin = 10, xmax = 10^3,
		ymin = 0.5*10^(-12),
		ymax = 2*10^(-2),
		xlabel={$N$},
		legend cell align = {left},
		legend pos = south west,
		legend entries={$m=0$,$m=1$,$m=2$}]
		\addplot  [ultra thick,dashed, mark = square*,mark options={solid},red]  coordinates { (16,7.7904e-03)(32,2.7316e-03) (64,7.5273e-04) (128,1.9298e-04) (256,4.8544e-05)(512,1.2181e-05)};
		\addplot  [ultra thick, dotted,mark = *, color = green]  coordinates { (16,5.1326e-03)(32,6.4755e-04) (64,4.9780e-05) (128,3.2796e-06) (256,2.0768e-07)(512,1.3264e-08)};
		\addplot  [ultra thick,mark = triangle*, blue]  coordinates { (16,4.5380e-03)(32,2.9480e-04) (64,7.0067e-06) (128,1.2117e-07) (256,1.9414e-09)(512,3.0518e-11)};
		\addplot[dashed, color = black] coordinates {(60,6*1e-7) (240,6*1e-7/4096) };
		\addplot[dashed, color = black] coordinates {(60,3e-4) (240,3e-4/256)};
		\addplot[dashed, color = black] coordinates {(60, 5e-3) (240,5e-3/16)};
	\end{loglogaxis}
	\node[rectangle] at (5.2,1) {\scriptsize slope 6};
	\node[rectangle] at (5.2,3.3) {\scriptsize slope 4};
	\node[rectangle] at (5.2,4.8) {\scriptsize slope 2};
\end{tikzpicture}
\hspace{0.5cm}
\begin{tikzpicture}[scale=0.8]
	\begin{loglogaxis}[
		grid=both,
		major grid style={line width=1pt,draw=gray!50},
		grid style = dotted,
		mark size = 3pt,
		xmin = 10, xmax = 10^3,
		ymin = 0.5*10^(-12),
		ymax = 2*10^(-2),
		xlabel={$N$},
		legend cell align = {left},
		legend pos = south west,
		legend entries={$m=0$,$m=1$,$m=2$}]
		\addplot  [ultra thick,dashed, mark = square*,mark options={solid},red]  coordinates { (16,1.8061e-03)(32,5.2517e-04) (64,1.3781e-04) (128,3.4900e-05) (256,8.7536e-06)(512,2.1902e-06)};
		\addplot  [ultra thick, dotted,mark = *, color = green]  coordinates { (16,1.0130e-03)(32,1.1645e-04) (64,8.8595e-06) (128,5.8274e-07) (256,3.6894e-08)(512,2.3137e-09)};
		\addplot  [ultra thick, mark = triangle*, color = blue]  coordinates { (16,8.0216e-04)(32,5.5584e-05) (64,1.3236e-06) (128,2.3317e-08) (256,3.7732e-10)(512,5.9597e-12)};
		\addplot[dashed, color = black] coordinates {(60,1.2*1e-7) (240,1.2*1e-7/4096) };
		\addplot[dashed, color = black] coordinates {(60,0.5e-4) (240,0.5e-4/256)};
		\addplot[dashed, color = black] coordinates {(60,1e-3) (240,1e-3/16)};
	\end{loglogaxis}
	\node[rectangle] at (5.2,0.7) {\scriptsize slope 6};
	\node[rectangle] at (5.2,3) {\scriptsize slope 4};
	\node[rectangle] at (5.2,4.5) {\scriptsize slope 2};
\end{tikzpicture}
\vspace{-1pt}
\caption{Approximation errors of the two-dimensional function $F_6(\T^2)$.}
\label{fig.L2conv_p=6}
\end{figure}

\begin{figure}[htbp]
\centering
\begin{tikzpicture}[scale=0.8]
	\begin{loglogaxis}[
		width=.4\textwidth,
		height=.4\textheight,
		grid=both,
		major grid style={line width=1pt,draw=gray!50},
		grid style = dotted,
		mark size = 2pt,
		%log ticks with fixed point,
		%ytick distance = 0.001,
		xtick = {10^3,10^4,10^5,10^6,10^7},
		xticklabels = {$10^3$,$10^4$,$10^5$,$10^6$,$10^7$},
		ymin = 10^(-9),
		ymax = 10^1,
		ytick = {10^(-8),10^(-6),10^(-4),10^(-2),10^0},
		yticklabels = {$10^{-8}$,$10^{-6}$,$10^{-4}$,$10^{-2}$,$10^0$},
		xlabel={$N$},
		title={$\psi_2$},
		legend cell align = {left},
		legend pos = south west,
		legend entries={$d=2$,$d=3$,$d=4$,$d=5$}]
		\addplot [thick,dashed, mark = square*,mark options={solid},red] table [x=N,y=Linf_d2] {Data_PeriodKer/sgqi_conv_diffdim_Linf_m2_F2.dat};
		\addplot [thick, dotted, mark=*,mark options={solid}, green] table [x=N,y=Linf_d3] {Data_PeriodKer/sgqi_conv_diffdim_Linf_m2_F2.dat};
		\addplot [thick,mark = triangle*, blue] table [x=N, y=Linf_d4] {Data_PeriodKer/sgqi_conv_diffdim_Linf_m2_F2.dat};
		\addplot [thick,densely dashed, mark = diamond*,mark options={solid}, magenta] table [x=N,y=Linf_d5] {Data_PeriodKer/sgqi_conv_diffdim_Linf_m2_F2.dat};
	\end{loglogaxis}
	%\draw [dashed] (2,0.4)--(5,1.6) node[below=1.5pt]{slope 4};
\end{tikzpicture}
\hspace{0.2cm}
\begin{tikzpicture}[scale=0.8]
	\begin{loglogaxis}[
		width=.4\textwidth,
		height=.4\textheight,
		grid=both,
		major grid style={line width=1pt,draw=gray!50},
		grid style = dotted,
		mark size = 2pt,
		%ymin= 1e-15,ymax = 1e-5,
		%xmin = 0, xmax = 8,
		title={$\psi_4$},
		xlabel={$N$}, %ylabel={Relative $L_2$ error},
        xtick = {10^3,10^4,10^5,10^6,10^7},
		xticklabels = {$10^3$,$10^4$,$10^5$,$10^6$,$10^7$},
		ymin = 10^(-9),
		ymax = 10^1,
		ytick = {10^(-8),10^(-6),10^(-4),10^(-2),10^0},
		yticklabels = {$10^{-8}$,$10^{-6}$,$10^{-4}$,$10^{-2}$,$10^0$},
		legend cell align = {left},
		legend pos = south west,
		legend entries={$d=2$,$d=3$,$d=4$,$d=5$}]
		\addplot [thick,dashed, mark = square*,mark options={solid},red] table [x=N,y=Linf_d2] {Data_PeriodKer/sgqi_conv_diffdim_Linf_m4_F2.dat};
		\addplot [thick, dotted, mark=*,mark options={solid}, green] table [x=N,y=Linf_d3] {Data_PeriodKer/sgqi_conv_diffdim_Linf_m4_F2.dat};
		\addplot [thick,mark = triangle*, blue] table [x=N, y=Linf_d4] {Data_PeriodKer/sgqi_conv_diffdim_Linf_m4_F2.dat};
		\addplot [thick,densely dashed, mark = diamond*,mark options={solid}, magenta] table [x=N,y=Linf_d5] {Data_PeriodKer/sgqi_conv_diffdim_Linf_m4_F2.dat};
	\end{loglogaxis}
	% \draw [dashed] (2,0.4)--(5,1.6) node[below=1.5pt]{slope 4};
\end{tikzpicture}
\hspace{0.2cm}
\begin{tikzpicture}[scale=0.8]
	\begin{loglogaxis}[
		width=.4\textwidth,
		height=.4\textheight,
		grid=both,
		major grid style={line width=1pt,draw=gray!50},
		grid style = dotted,
		mark size = 2pt,
		%ymin= 1e-15,ymax = 1e-5,
		%xmin = 0, xmax = 8,
		title={$\psi_6$},
		xlabel={$N$}, %ylabel={Relative $L_2$ error},
		ymin = 10^(-9),
		ymax = 10^1,
		ytick = {10^(-8),10^(-6),10^(-4),10^(-2),10^0},
		yticklabels = {$10^{-8}$,$10^{-6}$,$10^{-4}$,$10^{-2}$,$10^0$},
		legend cell align = {left},
		legend pos = south west,
		legend entries={$d=2$,$d=3$,$d=4$,$d=5$}]
		\addplot [thick,dashed, mark = square*,mark options={solid},red] table [x=N,y=Linf_d2] {Data_PeriodKer/sgqi_conv_diffdim_Linf_m6_F2.dat};
		\addplot [thick, dotted, mark=*,mark options={solid}, green] table [x=N,y=Linf_d3] {Data_PeriodKer/sgqi_conv_diffdim_Linf_m6_F2.dat};
		\addplot [thick,mark = triangle*, blue] table [x=N, y=Linf_d4] {Data_PeriodKer/sgqi_conv_diffdim_Linf_m6_F2.dat};
		\addplot [thick,densely dashed, mark = diamond*,mark options={solid}, magenta] table [x=N,y=Linf_d5] {Data_PeriodKer/sgqi_conv_diffdim_Linf_m6_F2.dat};
	\end{loglogaxis}
	% \draw [dashed] (2,0.4)--(5,1.6) node[below=1.5pt]{slope 4};
\end{tikzpicture}
\vspace{-1pt}
\caption{Approximation errors for  $F_6(\T^d)$ on sparse grids for  dimensions $d=2,3,4,5$.}
\label{fig:sgqi_diff_dim_Linf}
\end{figure}

\begin{figure}[htbp]
\centering
\vspace{1cm}
\begin{tikzpicture}[scale=0.8]
	\begin{loglogaxis}[
		grid=both,
		major grid style={line width=1pt,draw=gray!50},
		grid style = dotted,
		mark size = 2.5pt,
		%log ticks with fixed point,
		%ytick distance = 0.001,
		% ymin = 10^(-6),
		% ymax = 10^(0),
		% ytick = {10^(-6),10^(-4),10^(-2),10^(0)},
		% yticklabels = {$10^{-6}$,$10^{-4}$,$10^{-2}$,$10^{0}$},
		xlabel={$N$},
		title={$\psi_{2m+2}$, $d=3$},
		legend cell align = {left},
		legend pos = south west,
		legend entries={$m=0$,$m=1$,$m=2$}]
		\addplot [ultra thick,dashed, mark = square*,mark options={solid},red] table [x=N,y=Linf_m2] {Data_PeriodKer/sgqi_conv_diffOrder_Linf_d3_F2.dat};
		\addplot [ultra thick, dotted, mark=*,mark options={solid}, green] table [x=N,y=Linf_m4] {Data_PeriodKer/sgqi_conv_diffOrder_Linf_d3_F2.dat};
		\addplot [ultra thick,mark = triangle*, blue] table [x=N, y=Linf_m6] {Data_PeriodKer/sgqi_conv_diffOrder_Linf_d3_F2.dat};
	\end{loglogaxis}
	\draw [dashed] (3,3)--(5.5,0.5);
	\node[rectangle] at (2.5,2.5) {slope 1};
\end{tikzpicture}
\hspace{0.5cm}
\begin{tikzpicture}[scale=0.8]
	\begin{loglogaxis}[
		grid=both,
		major grid style={line width=1pt,draw=gray!50},
		grid style = dotted,
		mark size = 2.5pt,
		%log ticks with fixed point,
		%ytick distance = 0.001,
		% ymin = 10^(-6),
		% ymax = 10^(0),
		% ytick = {10^(-6),10^(-4),10^(-2),10^(0)},
		% yticklabels = {$10^{-6}$,$10^{-4}$,$10^{-2}$,$10^{0}$},
		xlabel={$N$},
		title={$\psi_{2m+2}$, $d=5$},
		ylabel={},
		legend cell align = {left},
		legend pos = south west,
		legend entries={$m=0$,$m=1$,$m=2$}]
		\addplot [ultra thick,dashed, mark = square*,mark options={solid},red] table [x=N,y=Linf_m2] {Data_PeriodKer/sgqi_conv_diffOrder_Linf_d5_F2.dat};
		\addplot [ultra thick, dotted, mark=*,mark options={solid}, green] table [x=N,y=Linf_m4] {Data_PeriodKer/sgqi_conv_diffOrder_Linf_d5_F2.dat};
		\addplot [ultra thick,mark = triangle*, blue] table [x=N, y=Linf_m6] {Data_PeriodKer/sgqi_conv_diffOrder_Linf_d5_F2.dat};
	\end{loglogaxis}
	\draw [dashed] (3,3)--(5.5,0.5);
	\node[rectangle] at (2.5,2.5) {slope 1};
\end{tikzpicture}
\vspace{-1pt}
\caption{Approximation errors for  $F_6(\T^d)$ on sparse grids for  dimensions $d=3$ and $d=5$.}
\label{fig.ErrHighDim_Sgqi_Dim_d=5}
\end{figure}

Next, we use  quasi-interpolation on sparse grids  with restricted  generalized Gaussian kernels $\psi_{2m+2}$ ($m=0,1,2$) to approximate the test function $F_6(\T^d)$ for $d=2,3,4,5$, respectively. In addition, we also compare prediction errors at a randomly chosen point $x=(1.44,0.56,4.17)$, running time, and  numbers of grid points between sparse grids and its full grids counterpart in Table $3$ under  $m=0$ and $d=3$ as an example. We can find that  sparse grids quasi-interpolation takes  much less computation time  than its full grids counterpart at high-level grids.

\begin{table}[htbp]
\centering
%\captionsetup{labelsep=period,labelfont=bf}
\caption{Approximation errors and running time for $F_6(\T^3)$ with $m=0$.}
\label{tab:3D_Computation_time}
\begin{tabular}{ccccccccc} % Ensure there are exactly 7 columns defined
\toprule
 & \multicolumn{2}{c}{error} & \multicolumn{2}{c}{running time (/$s$)} & \multicolumn{2}{c}{numbers of grid points}  \\
\cmidrule(lr){2-3} \cmidrule(lr){4-5} \cmidrule(lr){6-7} % Align the small horizontal lines correctly
$l$& sparse  & full & sparse & full & sparse & full  \\
\specialrule{1.5pt}{0pt}{0pt}
$5$& 2.004e-04&7.602e-05 &0.081 &0.015 & 3713 & 32768  \\
$6$  & 9.019e-05 &3.654e-05 &  0.101  & 0.029& 8961 &  262144 \\
$7$ &3.534e-05  & 1.148e-05 & 0.164   &0.196 & 21249 & 2097152\\
$8$ & 1.483e-05 & 3.336e-06 & 0.409&1.653& 49665 & 16777216\\
$9$ & 6.338e-06 & 2.567e-07 &  0.765 & 16.515& 114689&  134217728\\
\bottomrule
\end{tabular}
\end{table}

Relative $L_{\infty}$-norm approximation errors  are provided in  Figure 
 \ref{fig:sgqi_diff_dim_Linf}. We observe a small decrease in convergence rate as the dimension increases. Nevertheless, sparse grids quasi-interpolation with  restricted generalized Gaussian kernels satisfying high-order periodic Strang-Fix conditions do provide better approximation  with  higher convergence rate. More importantly, as demonstrated in Figure \ref{fig.ErrHighDim_Sgqi_Dim_d=5},  there is even an upturn of relative $L_{\infty}$-norm approximation errors in the case of  restricted generalized Gaussian kernel satisfying periodic Strang-Fix condition of order six. In particular,   convergence order in terms of $N$ exceeds one that surpasses the performance of  well-known and commonly used Monte-Carlo  method for high dimensional function approximation.

\textbf{Example 2.}
We consider non-periodic functions over a cube  \cite[Formula (55)]{nasdala2021efficient}
\begin{equation*}
    g(\y)=\prod_{r=1}^d \left(y_r^2-y_r+\frac{3}{4}\right),\ \ \y\in [-\pi,\pi]^d.
\end{equation*} 

We first approximate $g(\y)$  using classical quasi-interpolation scheme defined in \cite[Formula (1-2)]{2017Multilevel} based on discrete function values evaluated at full uniform grids for $d=2$.  As an example, we choose $100$ prediction points including $64$ interior points  and $36$ boundary points  (see  the left Subfigure  of  Figure \ref{fig.ErrHighDim_Sgqi_Dim_d2}). Relative $L_{\infty}$-norm approximation errors  are provided in the right  Subfigure of Figure \ref{fig.ErrHighDim_Sgqi_Dim_d2}. We can observe that the scheme provides an excellent convergence  rate at $64$ interior points. However,  it losses convergence at $100$ prediction points, suggesting that boundary effects significantly affect the approximation accuracy.

\begin{figure}[htbp]
\centering

\begin{minipage}{0.42\linewidth}
\centering
\begin{tikzpicture}[scale=2.3]
\draw[thick] (-1,-1) rectangle (1,1);

\foreach \i in {0,...,9}{
  \foreach \j in {0,...,9}{
    \pgfmathsetmacro\x{-1 + 2*\i/9}
    \pgfmathsetmacro\y{-1 + 2*\j/9}

    \ifnum\i=0\relax
      \filldraw[fill=white] (\x,\y) circle (1pt);
    \else\ifnum\i=9\relax
      \filldraw[fill=white] (\x,\y) circle (1pt);
    \else\ifnum\j=0\relax
      \filldraw[fill=white] (\x,\y) circle (1pt);
    \else\ifnum\j=9\relax
      \filldraw[fill=white] (\x,\y) circle (1pt);
    \else
      \fill[blue!70] (\x,\y) circle (1pt);
    \fi\fi\fi\fi
  }
}

\node[below] at (-1,-1) {$-\pi$};
\node[below] at ( 1,-1) {$\pi$};

\end{tikzpicture}
\end{minipage}
\hspace{0.2cm}
\begin{minipage}{0.55\linewidth}
\centering
\begin{tikzpicture}[scale=0.8]
\begin{loglogaxis}[
	grid=both,
	major grid style={line width=1pt,draw=gray!50},
	grid style=dotted,
	mark size=2.5pt,
	ymin=10^(-7),
	ymax=10^(0),
	xlabel={$N$},
	legend pos=south west,
	legend entries={Boundary and interior points,Interior points}
]
\addplot [ultra thick,dashed,mark=square*,red]
table [x=N,y=Linf_m2] {Data_PeriodKer/sgqi_conv_diffOrder_Linf_d1_non_periodic_relative_error_non_convergence.dat};

\addplot [ultra thick,dotted,mark=*,green]
table [x=N,y=Linf_m4] {Data_PeriodKer/sgqi_conv_diffOrder_Linf_d1_non_periodic_relative_error_non_convergence.dat};

\end{loglogaxis}
\end{tikzpicture}
\end{minipage}

\caption{Prediction points and approximation errors.}
\label{fig.ErrHighDim_Sgqi_Dim_d2}
\end{figure}

To address this problem, we apply quasi-interpolation $Q_{\h,\s}g$ defined in Formula \eqref{np} to approximate $g$. Here we choose  weight function $\omega_r(\gamma_r(x_r))=1$ and the  logarithmic  (torus-to-cube)  transformation function as 
$$
\gamma_r(x_r)=\pi \cdot \frac{(\pi + x_r)^{\eta_r} - (\pi - x_r)^{\eta_r}}{(\pi + x_r)^{\eta_r} + (\pi - x_r)^{\eta_r}},
$$
$$
\gamma_r^{'}(x_r)=\frac{4\pi \eta_r \cdot (\pi + x_r)^{\eta_r - 1} \cdot (\pi - x_r)^{\eta_r - 1}}{\left((\pi + x_r)^{\eta_r} + (\pi - x_r)^{\eta_r}\right)^2}
$$
with $x_r \in [-\pi,\pi], \ \eta_r\in \mathbb{R}_{+}$. 
We first approximate the above non-periodic functions $g$ for $d=1,2$ with $\eta_r=2, \ r=1,\cdots,d$ on regular tensor grids.  The restricted generalized Gaussian kernels $\psi_{2m+2}$ under $m=0,1,2$ are used to compute weighted $L_{\infty}$-norm approximation errors and corresponding convergence rates. Numerical results are  presented in Table \ref{tab:1D_FuncAppr_gauss_non_periodic} and Table \ref{tab:2D_FuncAppr_gauss_non_periodic}, respectively. The results confirm that the convergence rates are consistent with the orders of generalized  Strang–Fix conditions.

\begin{table}[htbp]
\centering
%\captionsetup{labelsep=period,labelfont=bf}
\caption{Weighted $L_{\infty}$-norm approximation errors and convergence rates for $d=1$.}
\label{tab:1D_FuncAppr_gauss_non_periodic}
\begin{tabular}{ccccccc}
\toprule
 & \multicolumn{2}{c}{$m=0$} & \multicolumn{2}{c}{$m=1$} & \multicolumn{2}{c}{$m=2$} \\
\cmidrule(lr){2-3} \cmidrule(lr){4-5} \cmidrule(lr){6-7}
$N$ & error & rate & error & rate & error & rate \\
\specialrule{1.5pt}{0pt}{0pt}
$64$   & 1.065e-02 &      & 1.325e-02 &      & 1.151e-01 &      \\
$128$  & 6.428e-03 & 0.74 & 5.864e-03 & 1.19 & 2.271e-03 & 5.73 \\
$256$  & 5.867e-04 & 2.11 & 5.869e-04 & 2.27 & 1.457e-05 & 6.53 \\
$512$  & 1.421e-04 & 2.23 & 2.763e-06 & 4.03 & 1.679e-07 & 6.59 \\
$1024$ & 2.357e-05 & 2.33 & 6.999e-08 & 4.64 & 1.410e-09 & 6.66 \\
\bottomrule
\end{tabular}
\end{table}

\begin{table}[htbp]
\centering
%\captionsetup{labelsep=period,labelfont=bf}
\caption{Weighted $L_{\infty}$-norm approximation errors and convergence rates for $d=2$.}
\label{tab:2D_FuncAppr_gauss_non_periodic}
\begin{tabular}{ccccccc}
\toprule
 & \multicolumn{2}{c}{$m=0$} & \multicolumn{2}{c}{$m=1$} & \multicolumn{2}{c}{$m=2$} \\
\cmidrule(lr){2-3} \cmidrule(lr){4-5} \cmidrule(lr){6-7}
$l$ & error & rate & error & rate & error & rate \\
\specialrule{1.5pt}{0pt}{0pt}
$7$  & 1.735e-03 &      & 2.127e-03 &      & 1.011e-03 &      \\
$8$  & 3.342e-04 & 2.38 & 1.467e-04 & 3.89 & 2.382e-05 & 5.42 \\
$9$  & 8.061e-05 & 2.21 & 4.868e-06 & 4.40 & 1.173e-07 & 6.54 \\
$10$ & 1.999e-05 & 2.14 & 2.721e-07 & 4.38 & 1.504e-09 & 6.58 \\
$11$ & 4.988e-06 & 2.09 & 1.658e-08 & 4.31 & 2.248e-11 & 6.48 \\
\bottomrule
\end{tabular}
\end{table}

\begin{figure}[htbp]
\centering
\vspace{1cm}
\begin{tikzpicture}[scale=0.8]
	\begin{loglogaxis}[
		grid=both,
		major grid style={line width=1pt,draw=gray!50},
		grid style = dotted,
		mark size = 3pt,
		xmin = 2000, xmax = 200000, 
		ymin = 10^(-11),
		ymax = 2*10^(-2),
		xlabel={$N$},
        title={$\psi_{2m+2}$, $d=2$},
		legend cell align = {left},
		legend pos = south west,
		legend entries={$m=0$,$m=1$,$m=2$}]
		
    \addplot[ultra thick, dashed, mark=square*, mark options={solid}, red] coordinates {
      (2817,9.889e-03)(6145,3.868e-03)(13313,4.710e-04)(28673,6.986e-05)(61441,4.022e-05)(131073,5.721e-06)};
    \addplot[ultra thick, dotted, mark=*, color=green] coordinates {
      (2817,3.643e-03)(6145,1.118e-03)(13313,2.553e-04)(28673,3.410e-06)(61441,2.492e-08)(131073,3.968e-09)};
    \addplot[ultra thick, mark=triangle*, blue] coordinates {
      (2817,2.678e-03)(6145,3.680e-04)(13313,1.181e-04)(28673,6.213e-07)(61441,7.883e-9)(131073,1.384e-10)};
     \addplot[dashed, color=black, thin, forget plot] coordinates {(10000, 4e-3) (50000, 1.6e-4)};
    
    \addplot[dashed, color=black, thin, forget plot] coordinates {(15000, 2.5e-4) (45000, 3.08e-6)};
    \addplot[dashed, color=black, thin, forget plot] coordinates {(20000, 2e-7) (80000, 3.125e-10)};
\end{loglogaxis}
	\node[rectangle] at (5.2,0.7) {\scriptsize slope 6};
	\node[rectangle] at (5.2,3.5) {\scriptsize slope 4};
	\node[rectangle] at (5.2,4.5) {\scriptsize slope 2};
\end{tikzpicture}
\hspace{0.5cm}
\begin{tikzpicture}[scale=0.8]
	\begin{loglogaxis}[
		grid=both,
		major grid style={line width=1pt,draw=gray!50},
		grid style = dotted,
		mark size = 2.5pt,
		%log ticks with fixed point,
		%ytick distance = 0.001,
		% ymin = 10^(-6),
		% ymax = 10^(0),
		% ytick = {10^(-6),10^(-4),10^(-2),10^(0)},
		% yticklabels = {$10^{-6}$,$10^{-4}$,$10^{-2}$,$10^{0}$},
		xlabel={$N$},
		title={$\psi_{2m+2}$, $d=3$},
		ylabel={},
		legend cell align = {left},
		legend pos = south west,
		legend entries={$m=0$,$m=1$,$m=2$}]
		\addplot [ultra thick,dashed, mark = square*,mark options={solid},red] table [x=N,y=Linf_m2] {Data_PeriodKer/sgqi_conv_diffOrder_Linf_d3_non_periodic_relative_error_1.dat};
		\addplot [ultra thick, dotted, mark=*,mark options={solid}, green] table [x=N,y=Linf_m4] {Data_PeriodKer/sgqi_conv_diffOrder_Linf_d3_non_periodic_relative_error_1.dat};
		\addplot [ultra thick,mark = triangle*, blue] table [x=N, y=Linf_m6] {Data_PeriodKer/sgqi_conv_diffOrder_Linf_d3_non_periodic_relative_error_1.dat};

\addplot[dashed, color=black, thin, forget plot]
  coordinates {(1.5e5, 1.555e-2) (9e5, 2.482e-3)};

\addplot[dashed, color=black, thin, forget plot]
  coordinates {(1.5e5, 2.391e-4) (9e5, 1.388e-5)};

\addplot[dashed, color=black, thin, forget plot]
  coordinates {(1.5e5, 1.365e-5) (9e5, 3.318e-7)};

	\end{loglogaxis}
	\node[rectangle] at (4.3,0.6) {\scriptsize slope 2.10};
	\node[rectangle] at (5.2,2.8) {\scriptsize slope 1.78};
	\node[rectangle] at (5.2,4.6) {\scriptsize slope 1.10};
\end{tikzpicture}
\vspace{-1pt}
\caption{Approximation errors for  $g(\y)$ on sparse grids  in  dimensions $d=2$ and $d=3$.}
\label{nonperiodic}
\end{figure}

   Moreover, we also use quasi-interpolation on sparse grids with the restricted generalized Gaussian kernels $\psi_{2m+2}$ under $m=0,1,2$ to approximate the non-periodic functions for $d=2,3$, respectively. Weighted $L_{\infty}$-norm  approximation errors are provided in the left subfigure of  Figure \ref{nonperiodic} for $d=2$ with $\eta_r=2,\ r=1,2$.  Though the convergence rates are slightly less than the orders $2,4$ and $6$,  the scheme still provides high convergence rates.  Finally, we demonstrate relative weighted $L_{\infty}$-norm approximation errors  in the right subfigure of  Figure \ref{nonperiodic} for $d=3$ with $\eta_r=3,\ r=1,2,3$. The results show that the scheme can give a better convergence order than the Monte-Carlo method.

\section{Conclusions and discussions} 
Quasi-interpolation with radial kernels in the Schoenberg's model  often provides much lower approximation order. To circumvent the pitfall, the paper provides a new framework of constructing a quasi-interpolation scheme from generalized Gaussian kernel via the periodization technique and kernel restriction trick.  The key feature of our technique is  that the resulting quasi-interpolation provides the highest approximation order  that is up to the one of generalized Strang-Fix conditions satisfied by  generalized Gaussian kernels. However, the paper is only a starting point taking Gaussian kernels as an example, future works will focus on  thorough and comprehensive study of constructing high-order quasi-interpolation   from general radial kernels.
\label{sec:Conclusion}

\section*{Acknowledgments}We thank two anonymous referees for their constructive comments and 
suggestions that help us improve our paper.
The first author and the second author  are  supported by  Youth Project (Class A) of Anhui Provincial Natural Science Foundation (No. 2508085J009) and National Natural Science Foundation of China (No. 12271002).  The third author is  supported by National Natural Science Foundation of China (No. 12571407, 12101310), Natural Science Foundation of Jiangsu Province (No. BK20252037) and a Jiangsu Shuangchuang Team program (No. JSSCTD202449).

%\section*{References}
\bibliographystyle{plain}%plain  alpha
\bibliography{periodic_kernel}
\appendix 
\renewcommand{\thelemma}{A.\arabic{lemma}}
%\section{Proof of \ref{thm:Integral}}
\section{Proof of Theorem \ref{thm:Integral}}
\label{Appendix:A}

Before proving the theorem, we establish two preliminary lemmas: one concerning the Fourier transform expression of the generalized Gaussian kernel, and another providing a combinatorial identity, both for use in the proof of Theorem \ref{thm:Integral}. 
\begin{lemma}\label{lem:FourierTransform}
	Let $\phi_{2m+2}(r)$ be a two-dimensional generalized Gaussian kernel defined in \eqref{eq:Gauss-SF2m} and $L_j^{(0)}(x)$ be Laguerre polynomial \eqref{eq:GeneralLaguerrePoly} with $\beta=0$, then  we have
	\begin{equation}\label{form}
		\mathcal{F}_2\phi_{2m+2}(r;c)=\sqrt{2\pi}c\sum_{j=0}^m(-1)^j\binom{m + \tfrac{1}{2}}{m - j}L_j^{(0)}({\frac{c^2r^2}{2}})e^{-\frac{c^2r^2}{2}}, r=\|\bm{\omega}\|.
	\end{equation}
\end{lemma}
\begin{proof}
	The definition of $\phi_{2m+2}$ (that is  \eqref{eq:Gauss-SF2m}) together with  \eqref{eq:RBFFourTran} and  \eqref{eq:GeneralLaguerrePoly} leads to 
	\begin{equation}\label{FPsi}
		\begin{aligned}
			\mathcal{F}_2\phi_{2m+2}(r;c) &=2\pi\int_{0}^{\infty}\phi_{2m+2}(t)\cdot t J_0(rt)\d t\\
			&=\int_{0}^{\infty}\frac{\sqrt{2\pi}}{c}L_{m}^{(1/2)}(\frac{t^2}{2c^2})\ e^{-\frac{t^2}{2c^2}}\cdot tJ_0(rt)\d t\\
			&=\frac{\sqrt{2\pi}}{c}\sum_{j=0}^m\frac{(-1)^j}{j!}\binom{m + \frac{1}{2}}{m - j}\int_{0}^{\infty}\frac{t^{2j+1}}{(2c^2)^j}J_0(rt) e^{-\frac{t^2}{2c^2}}\d t.
		\end{aligned}
	\end{equation}
	From \cite[13.3]{watson1966treatise}, for any even integer  $\mu-\nu$ and positive number $p$,   we have the general Hankel's formula 
    \begin{align*}
        \int_{0}^{\infty}J_{\nu}(at)e^{-p^2t^2}\cdot t^{\mu-1}dt
       =~&\frac{\Gamma(\frac{1}{2}\nu+\frac{1}{2}\mu)\cdot (\frac{1}{2}a/p)^{\nu}}{2p^{\mu}\Gamma(\nu+1)}\\
        &~~\cdot e^{-\frac{a^2}{4p^2}}{_{1}F_1}(\frac{1}{2}\nu-\frac{1}{2}\mu+1;\nu+1;\frac{a^2}{4p^2}),
    \end{align*}
	where ${_{1}F_1}$ is the confluent hypergeometric function.
	In particular, in the above formula, we take $$a=r, ~\nu=0,~\mu=2j+2,~p^2=\frac{1}{2c^2}$$ and get
	\begin{equation}\label{special}
		\int_{0}^{\infty}t^{2j+1}J_0(rt) e^{-\frac{t^2}{2c^2}}\d t=\frac{j!(2c^2)^{j+1}}{2}e^{-\frac{c^2r^2}{2}}{_{1}F_1}(-j;1;\frac{c^2r^2}{2}).
	\end{equation}
	Observe that, for a positive integer $j$, the function ${_{1}F_1}(-j;1;z)$ reduces to a Laguerre polynomial,
	\begin{equation}\label{F1}
		{_{1}F_1}(-j;1;z)=L_j^{(0)}(z)=\sum_{k=0}^j\frac{(-1)^k}{k!}\binom{j}{k}z^k.
	\end{equation}
	Consequently, plugging \eqref{special}-\eqref{F1} into \eqref{FPsi}, we have   Identity \eqref{form}.
\end{proof}

\begin{lemma}{(A combinatorial identity)}\label{lem:CombIden}
	Let $k, m$ be two nonnegative integers and $z\in\R$, then the following identity holds,
	$$\sum_{j=k}^m(-1)^j\binom{z}{j-k}=\sum_{j=k}^{m}(-1)^j\binom{m+1-z}{m-j}\binom{j}{k}.$$
\end{lemma}

\begin{proof}
	Since
	\begin{align*}
		\sum_{j=0}^n(-1)^j\binom{z}{j}&=1+\sum_{j=1}^n(-1)^j\left[\binom{z-1}{j}+\binom{z-1}{j-1}\right]\\
		&=\sum_{j=0}^n(-1)^j\binom{z-1}{j}-\sum_{j=0}^{n-1}(-1)^j\binom{z-1}{j}=(-1)^n\binom{z-1}{n},
	\end{align*}
	we have
	$$\sum_{j=k}^m(-1)^j\binom{z}{j-k}=(-1)^k\sum_{j=0}^{m-k}(-1)^j\binom{z}{j}=(-1)^m\binom{z-1}{m-k}.$$
	On the other hand,  with the help of  hypergeometric function, we have
	\begin{align*}
		\sum_{j=k}^m(-1)^j\binom{m+1-z}{m-j}\binom{j}{k}
		=&\sum_{j=0}^{m-k}(-1)^{j+k}\binom{m+1-z}{m-k-j}\binom{j+k}{k}\\
		=&(-1)^k\binom{1+m-z}{m-k}{_{2}F_1}(k-m,1+k,2+k-z;1).
	\end{align*}
	
	Further, according to the definition of ${_{2}F_1}$ and the Chu-Vandermonde convolution formula \cite{askey1975orthogonal}, we obtain
	$${_{2}F_1}(-n,a,b;1)=\sum_{k=0}^n\frac{(-n)_k(a)_k}{k!(b)_k}=\frac{(b-a)_n}{(b)_n},$$
	and thus
	$${_{2}F_1}(k-m,1+k,2+k-z;1)=\frac{(1-z)_{m-k}}{(2+k-z)_{m-k}}.$$
	Moreover, noting that
	\begin{align*}
		\sum_{j=k}^m(-1)^j\binom{m+1-z}{m-j}\binom{j}{k}
		=&(-1)^k\binom{1+m-z}{m-k}\cdot \frac{(1-z)_{m-k}}{(2+k-z)_{m-k}}\\
		=&(-1)^k\frac{(1-z)_{m-k}}{(m-k)!}=(-1)^m\binom{z-1}{m-k},
	\end{align*}
	we complete the proof.
\end{proof}

We then recall two additional lemmas from the literature, which we state here for completeness and later use.
\begin{lemma}
	{(\cite[Formula 9.7.1]{Abramowitz1964handbook})}\label{lem:HankelExpan}
	Let $ I_{\ell}(z)$ be the modified Bessel function of the first kind. Then, for each fixed $\ell$ and large $z$,  $ I_{\ell}(z)$ has the Hankel’s asymptotic expansion
	\begin{equation*}\label{coeff}
		I_{\ell}(z)\sim \frac{e^{z}}{\sqrt{2\pi z}}\sum_{\gamma=0}^{\infty}(-1)^{\gamma}a_{\gamma}(\ell)z^{-\gamma},~~a_{\gamma}(\ell)=\frac{(\frac{1}{2}-\ell)_{\gamma}(\frac{1}{2}+\ell)_{\gamma}}{(-2)^{\gamma} \gamma!},
	\end{equation*}
	in which $(x)_{\gamma}=\prod_{j=0}^{{\gamma}-1}(x-j)$ represents the rising factorial of $x$. 
	
	Specifically, by taking $\mu=4\ell^2$, we have
	\begin{equation*}\label{eq:ModifiedBesselExpansion}
		I_{\ell}(z)\sim \frac{e^z}{\sqrt{2\pi z}}\left[1-\frac{\mu-1}{8z}+\frac{(\mu-1)(\mu-9)}{2!(8z)^2}-\frac{(\mu-1)(\mu-9)(\mu-25)}{3!(8z)^3}+\ldots\right]. 
	\end{equation*}
\end{lemma}

\begin{lemma}{(\cite[Lemma 3.1]{gao2017constructing})}\label{lem:Polyn}
	Let $g(t)=\sum_{l=0}^{m+1} a_l t^l$ be a univariate polynomial of degree $m+1$, then the polynomial defined by 
	$$G_m(t)=\sum_{j=0}^m(-1)^j\frac{t^j}{j!}g^{(j)}(t)$$
	satisfies the condition
	$$G_m(t)=a_0+\O(t^{m+1}),~~t\rightarrow 0.$$
\end{lemma}

With these results in place, we now present the proof.
\begin{proof}[Proof of Theorem \ref{thm:Integral}]
We begin by showing the following identity 
\begin{equation}\label{eq:identity_high_low}
    \phat_{2m+2}(\ell;\rho)=\sum_{j=0}^m\frac{(-\rho)^j}{j!}\frac{d^j}{d\rho^j}\big(\phat_{2}(\ell;\rho)\big), \ \rho=c^2.
\end{equation}

Using Lemma \ref{lem:FourierTransform} and Lemma \ref{lem:FourierCoeff} for $d=2$,  we have
	\begin{equation*}
		\begin{aligned}
			\phat_{2m+2}(\ell;\rho)&=\int_{0}^{\infty}tJ_{\ell}^2(t) \mathcal{F}_2\phi_{2m+2}(t)\d t.
		\end{aligned}
	\end{equation*}
Based on the explicit form of $\mathcal{F}_2\phi_{2m+2}$ in terms of Laguerre polynomial, we expand $\phat_{2m+2}(\ell;\rho)$ as
\begin{equation*}
\begin{aligned}
\phat_{2m+2}(\ell;\rho)= &\sqrt{2\pi}\rho^{1/2}\sum_{j=0}^m(-1)^j\binom{m+\tfrac{1}{2}}{m-j}\int_{0}^{\infty}tJ_{\ell}^2(t)L_j^{(0)}({\frac{\rho t^2}{2}})e^{-\frac{\rho t^2}{2}}\d t\\
    =&\sqrt{2\pi}\rho^{1/2}\sum_{k=0}^m\frac{(-1)^k}{k!}\sum_{j=k}^{m}(-1)^j\binom{m+\tfrac{1}{2}}{m-j}\binom{j}{k}\int_{0}^{\infty}tJ_{\ell}^2(t)({\frac{\rho t^2}{2}})^k e^{-\frac{\rho t^2}{2}}\d t,
\end{aligned}
\end{equation*}
after reindexing the summation over $j$ and $k$.
    
Moreover,  by Lemma \ref{lem:CombIden} with $z = 1/2$, we have
    \begin{equation*}
		\sum_{j=k}^m(-1)^j\binom{\tfrac{1}{2}}{j - k}
=\sum_{j=k}^{m}(-1)^j\binom{m + \tfrac{1}{2}}{m - j} \binom{j}{k}
.
	\end{equation*} 
 This in turn leads to
    \begin{equation*}
    \begin{aligned}
        \phat_{2m+2}(\ell;\rho)=&\sqrt{2\pi}\sum_{k=0}^m\frac{(-1)^k}{k!}\int_{0}^{\infty}tJ_{\ell}^2(t)\sum_{j=k}^{m}(-1)^j\binom{\tfrac{1}{2}}{j - k}
({\frac{\rho t^2}{2}})^k \rho^{1/2}e^{-\frac{\rho t^2}{2}}\d t\\
                =&\sqrt{2\pi}\sum_{j=0}^m\frac{(-1)^j}{j!}\int_{0}^{\infty}tJ_{\ell}^2(t)\sum_{k=0}^j\binom{j}{k}\big(\frac{1}{2}\big)_{j-k}\rho^k(-\frac{t^2}{2}\big)^k \rho^{1/2}e^{-\rho\frac{t^2}{2}}\d t.
    \end{aligned}	
	\end{equation*}
    Furthermore,  according to the Leibniz rule for differentiation, that is, 
    $$\frac{d^j}{d\rho^j}\left(\rho^{1/2}e^{-\rho\frac{t^2}{2}}\right)=\sum_{k=0}^j\binom{j}{k}\big(\frac{1}{2}\big)_{j-k}\rho^{1/2-(j-k)}\big(-\frac{t^2}{2}\big)^ke^{-\rho\frac{t^2}{2}},$$
    we obtain
    \begin{equation*}
		\begin{aligned} 
			\phat_{2m+2}(\ell;\rho)&=\sqrt{2\pi}\int_{0}^{\infty}tJ_{\ell}^2(t)\sum_{j=0}^m\frac{(-\rho)^j}{j!}\frac{d^j}{d\rho^j}\left(\rho^{1/2}e^{-\rho\frac{t^2}{2}}\right)\d t.\\
            	&=\sum_{j=0}^m\frac{(-\rho)^j}{j!}\frac{d^j}{d\rho^j}\Big(\sqrt{2\pi}\rho^{1/2}\int_{0}^{\infty}tJ_{\ell}^2(t)e^{-\rho\frac{t^2}{2}}\d t\Big)\\
                &=\sum_{j=0}^m\frac{(-\rho)^j}{j!}\frac{d^j}{d\rho^j}\big(\phat_{2}(\ell;\rho)\big).\\
		\end{aligned}
	\end{equation*}
   
Then we show property (i) holds.

The Fourier coefficient $\phat_{2}(\ell;\rho)$ has the exact form \cite{hubbert2002radial,narcowich2007approximation, watson1966treatise}:
	$$\phat_{2}(\ell;\rho)=\sqrt{2\pi}\rho^{-1/2} e^{-\frac{1}{\rho}} I_{\ell}\left(\frac{1}{\rho}\right),$$
    where $I_{\ell}$ is the modified Bessel function of the first kind.  This together with  Lemma \ref{lem:HankelExpan} gives the asymptotic expansion
    $$\phat_{2}(\ell;\rho)\sim\sum_{\gamma=0}^{\infty}(-1)^{\gamma}a_{\gamma}(\ell)\rho^{\gamma},\quad \text{as}\ \rho\rightarrow 0.$$
Thus, for fixed $\ell$, an application of \cite[(95)]{nemes2017error} yields
	$$\phat_{2}(\ell;\rho)= \sum_{\gamma=0}^{N}(-1)^{\gamma}a_{\gamma}(\ell)\rho^{\gamma}+\mathcal{R}_{N+1}(\rho,\ell),\quad \text{with}~~|\mathcal{R}_{N+1}(\rho,\ell)|\leq C_{N+1}|a_{N+1}(\ell)\rho^{N+1}|,$$
    for any  positive integer  $N$ and  any sufficiently small  $\rho=c^2$.
 Therefore, based on  Identity \eqref{eq:identity_high_low} and  Lemma \ref{lem:Polyn},  we have 
$$\phat_{2m+2}(\ell; \rho)=a_0(\ell)+\mathcal{O}(a_{m+1}(\ell)\rho^{m+1}), ~~\rho\rightarrow 0$$ holds true for any $N\geq m+1$.
In addition,  according to \eqref{coeff},  we have  $a_0(\ell)=1$ and  $$a_{m+1}(\ell)\rho^{m+1}=\mathcal{O}(\ell^{2m+2}c^{2m+2}),  ~~c\rightarrow 0.$$
Consequently,  property (i) holds.

We end the proof with showing property (ii) holds. 

For fixed $\rho=c^2>0$ and sufficiently large $\ell$, according to \cite{Abramowitz1964handbook,hubbert2002radial}, we have
	$$I_{\ell}\left(\frac{1}{\rho}\right)\sim \frac{1}{\sqrt{2\pi \ell}}\left(\frac{e}{2\rho \ell}\right)^{\ell},$$
	which in turn leads to
	$$\phat_2(\ell;\rho)=\sqrt{2\pi}\rho^{-1/2} e^{-\frac{1}{\rho}} I_{\ell}\left(\frac{1}{\rho}\right)\sim\frac{e^{-\frac{1}{c^2}}}{2^{\ell}c^{2\ell+1}\sqrt{\ell}}\left(\frac{e}{\ell}\right)^{\ell}.$$
 Thus,  $\phat_2(\ell;\rho)$  decay exponentially for sufficiently large $\ell$. 	Moreover,  the identity   $$\frac{\partial^j I_{\ell}(z)}{\partial z^j}=2^{-j}\sum_{k=0}^j\binom{j}{k}I_{2k-j+\ell}(z)$$ together with Identity  \eqref{eq:identity_high_low}
	yields
	$$\phat_{2m+2}(\ell;\rho)=e^{-\frac{1}{\rho}}\cdot \sum_{j=0}^m \sum_{k=0}^j p_{jk}(\rho^{-1/2})I_{2k-j+\ell}(\frac{1}{\rho}),$$
	where $p_{jk}$ are some polynomials. Thus,  we can verify  that  coefficient $\phat_{2m+2}(\ell;\rho)$ also decays exponentially for sufficiently large  $\ell$.
\end{proof}

\section{Technical results}
\renewcommand{\thelemma}{B.\arabic{lemma}}
 We first provide some notations.  Define \begin{align*}
  &\gamma_2:=\max_{\k_0\in \JN} \left(\frac{1}{N^2}+\left|\frac{\k_0}{N}\right|_2^2\right)^{(s+\tau) /2}< \infty,\\
&\gamma_3: = \left\{
\begin{array}{ll}
\max_{\k_0\in \JN}  \left(\sum_{\bl \in \mathbb{Z}^d} \left|\frac{\k_0}{N}+\bl \right|_2^{-\mu p}\right)^{1/p}< \infty, & \text{if } q > 1,\ 1/q+1/p=1, \mu >d(1-1/q),\\
\max_{\k_0\in \JN}  \sup_{\bl \in \mathbb{Z}^d} \left|\frac{\k_0}{N}+\bl \right|_2^{-\mu }< \infty, & \text{if } q = 1, \mu >0.
\end{array}
\right.
\end{align*}
Then we get the following lemma, which is  applied  in the proof of Theorem \ref{maintheorem}.
\begin{lemma}\label{E_1}
Let $\gamma_1$, $\mathcal{E}_1$ and $\mathcal{E}_2$ be defined in \eqref{eq:gamma_1}, \eqref{eq:E_1} and \eqref{eq:E_2}, respectively.  Let $\bpsi_{\h,\s}$ be an isotropic kernel defined in \eqref{eq:IsoTPKer} and satisfy the periodic Strang-Fix conditions of order $s>d/2$. Then, under the assumptions of Lemma \ref{lem:ConvCardInterp} together with $\sigma=\min\{\mu,s+\tau\}$,  the inequalities  
   \begin{align*}
   \mathcal{E}_1\leq \gamma_2^q 
    \ \gamma_3^q \ \|f\|_{A_q^{\mu}(\mathbb{T}^d)}^q, \ \ 
   \mathcal{E}_2 \leq C \ \gamma_1^q \  \gamma_2^q \  \gamma_3^q \  \|f\|_{A_q^{\mu}(\T^d)}^q,
\end{align*}
hold for $1\leq q < \infty$.
\end{lemma}
\begin{proof}
  We first consider the case $1<q<\infty$. Applying H\"older's inequality to $\mathcal{E}_1$, we have 
\begin{align*}
\mathcal{E}_1
%\leq &~N^{\sigma q-\tau q-sq}\sum_{\k_0\in \JN}|\k_0|_2^{sq}\cdot  (1+|\k_0|_2^2)^{\tau q/2} \cdot \Big(\sum_{\bl \in \mathbb{Z}^d}  (1+|\k_0+\bl N|_2^2) ^{-\mu p/2}\Big)^{q/p} \\
%&\cdot \Big(\sum_{\bl \in \mathbb{Z}^d}   (1+|\k_0+\bl N|_2^2) ^{\mu q/2}|\fhat(\k_0+\bl N)|^q\Big)\\
\leq   & \sum_{\k_0\in \JN} \underbrace{\Big(|\k_0|_2^{sq}\cdot  (1+|\k_0|_2^2)^{\tau q/2} \cdot N^{-\tau q-sq}\Big)}_{\mathcal{H}_1}\cdot \underbrace{\Big(\sum_{\bl \in \mathbb{Z}^d} (1+|\k_0+\bl N|_2^2) ^{-\mu p/2}N^{\sigma p}\Big)^{q/p}}_{\mathcal{H}_2} \\
&\cdot\Big(\sum_{\bl \in \mathbb{Z}^d}  (1+|\k_0+\bl N|_2^2) ^{\mu q/2}|\fhat(\k_0+\bl N)|^q\Big),\ 1/q+1/p=1.
\end{align*}
Further, we have 
\begin{align}
\label{eq:H_1}
 \mathcal{H}_1 \leq   \max_{\k_0 \in \JN}(1+|\k_0|_2^2)^{(s+\tau)q/2}N^{-\tau q-sq}\leq \max_{\k_0\in \JN} \left(\frac{1}{N^2}+\left|\frac{\k_0}{N}\right|_2^2\right)^{(s+\tau)q /2} \leq\gamma_2^q.
\end{align}
Since we assume that $\sigma \leq \mu$, we have 
\begin{equation}
\label{eq:H_2}
\mathcal{H}_2 \leq \max_{\k_0\in \JN}  \Big(\sum_{\bl \in \mathbb{Z}^d} (1+|\k_0+\bl N|_2^2) ^{-\mu p/2}N^{\mu p}\Big)^{q/p}\leq \max_{\k_0\in \JN}  \left(\sum_{\bl \in \mathbb{Z}^d} \left|\frac{\k_0}{N}+\bl \right|_2^{-\mu p}\right)^{q/p}\leq \gamma_3^q.
\end{equation}
These in turn leads to 
\begin{align*}
     \mathcal{E}_1\leq &~ \gamma_2^q \ \gamma_3^q  \sum_{\k_0\in \JN}\sum_{\bl \in \mathbb{Z}^d}  (1+|\k_0+\bl N|_2^2) ^{\mu q/2}|\fhat(\k_0+\bl N)|^q\\
     = &~ \gamma_2^q \ \gamma_3^q \sum_{\k\in \mathbb{Z}^d} (1+|\k |_2^2) ^{\mu q/2}|\fhat(\k )|^q\\
     =&~ \gamma_2^q \ \gamma_3^q\ \|f\|_{A_q^{\mu}(\mathbb{T}^d)}^q,
\end{align*}
where we have used the definition of $\|f\|_{A_q^{\mu}(\mathbb{T}^d)}$ in \eqref{eq:AqalpSpace}.

On the other hand, applying    H\"older's inequality again to  $\mathcal{E}_2$, we can get
\begin{equation*}
    \begin{split}
    	\mathcal{E}_2
   \leq &\sum_{\k_0\in \JN}  \sum_{\bnu\in\Z^d\backslash \{\bm{0}\}}  b_{\bnu}^q|\k_0|_2^{sq}N^{-(s+\tau) q }N^{-\tau q }(1+|\k_0+\bnu N|_2^2)^{\tau q/2}\cdot\\
    &\!\!\!\!\!\!\!\!\!\!\!\!\Big( \sum_{\bl \in \mathbb{Z}^d} (1+|\k_0+\bnu N+\bl N|_2^2)^{-\mu p/2}N^{\sigma p}\Big) ^{q/p}
   \sum_{\bl \in \mathbb{Z}^d} (1+|\k_0+\bnu N+\bl N|_2^2)^{\mu q/2}|\fhat(\k_0+\bnu N+\bl N)|^q.
    \end{split}
\end{equation*}
Based on the following identity
\begin{equation*}
    \begin{split}
  &  \Big( \sum_{\bl \in \mathbb{Z}^d} (1+|\k_0+\bnu N+\bl N|_2^2)^{-\mu p/2}\Big) ^{q/p}\sum_{\bl \in \mathbb{Z}^d} (1+|\k_0+\bnu N+\bm{\ell}N|_2^2)^{\mu q/2}|\fhat(\k_0+\bnu N+\bm{\ell}N)|^q\\%=&~~ \Big( \sum_{\bl \in \mathbb{Z}^d} (1+|\k_0+(\bnu+\bl)N|_2^2)^{-\mu p/2}\Big) ^{q/p}\sum_{\bl \in \mathbb{Z}^d}(1+|\k_0+(\bnu+\bl) N|_2^2)^{\mu q/2}|\fhat(\k_0+(\bnu+\bl) N)|^q\\
  =&~~ \Big( \sum_{\bl' \in \mathbb{Z}^d} (1+|\k_0+\bl' N|_2^2)^{-\mu p/2}\Big) ^{q/p}\sum_{\bl' \in \mathbb{Z}^d}(1+|\k_0+\bl' N|_2^2)^{\mu q/2}|\fhat(\k_0+\bl' N )|^q, \  \bl':=\bl+\bnu, 
    \end{split}
\end{equation*}
we obtain 
\begin{equation*}
    \begin{split}
    	\mathcal{E}_2
   = %&  \sum_{\k_0\in \JN} \Big(  \sum_{\bl' \in \mathbb{Z}^d} (1+|\k_0+\bl' N|_2^2)^{-\mu p/2}N^{\sigma p }\Big) ^{q/p}\sum_{\bl' \in \mathbb{Z}^d} (1+|\k_0+\bl' N|_2^2)^{\mu q/2} |\fhat(\k_0+\bl' N)|^q  \\
 %&\!\!\!\!\!\!\!\!\!\cdot \sum_{\bnu\in\Z^d\backslash \{\bm{0}\}}\mathcal{B}_{\bnu}^q N^{-\tau q }\cdot (1+|\k_0+\bnu N|_2^2)^{\tau q/2}|\k_0|_2^{sq}\cdot  (1+|\k_0|_2^2)^{\tau q/2}\cdot N^{-(s+\tau)q}(1+|\k_0|_2^2)^{-\tau q/2}\\
  &  \sum_{\k_0\in \JN} \mathcal{H}_1\mathcal{H}_2\sum_{\bl' \in \mathbb{Z}^d} (1+|\k_0+\bl' N|_2^2)^{\mu q/2} |\fhat(\k_0+\bl' N)|^q  \\
 &\!\!\!\!\!\!\!\!\!\cdot \underbrace{\sum_{\bnu\in\Z^d\backslash \{\bm{0}\}}b_{\bnu}^q} N^{-\tau q }\cdot (1+|\k_0+\bnu N|_2^2)^{\tau q/2}_{\mathcal{H}_3}\cdot(1+|\k_0|_2^2)^{-\tau q/2},
    \end{split}
\end{equation*}
where we have used $(1+|\k_0|_2^2)^{\tau q/2}(1+|\k_0|_2^2)^{-\tau q/2}=1.$
Note that $$\max_{\k_0\in\JN}(1+|\k_0|_2^2)^{-\tau q/2}=1.$$ Combining \eqref{eq:H_1} with \eqref{eq:H_2}, we obtain
\begin{align*}
   \mathcal{E}_2 \leq &~ \gamma_2^q \ \gamma_3^q \sum_{\k_0\in \JN}\sum_{\bl' \in \mathbb{Z}^d} (1+|\k_0+\bl' N|_2^2)^{\mu q/2} |\fhat(\k_0+\bl' N)|^q \cdot \mathcal{H}_3
\end{align*}
Furthermore, based on Jensen inequality and $\k_0 \in \JN$, we have 
\begin{align*}
    1+|\frac{\k_0}{N}+\bnu |_2^2\leq 1+2(|\bnu|_2^2+|\frac{\k_0}{N}|_2^2)\leq 1+2|\bnu|_2^2+\frac{d}{2}\le C(1+|\bnu|_2^2),
\end{align*}
which in turn leads to 
\begin{align*}
\mathcal{H}_3 \leq&~~\max_{\k_0\in\JN}\Big(\sum_{\bnu\in\Z^d\backslash \{\bm{0}\}}  b_{\bnu}^q~(\frac{1}{N^2}+|\frac{\k_0}{N}+\bnu |_2^2)^{\tau q/2}\Big)\\
\leq &~~\max_{\k_0\in\JN}\Big(\sum_{\bnu\in\Z^d\backslash \{\bm{0}\}}  b_{\bnu}^q~(1+|\frac{\k_0}{N}+\bnu |_2^2)^{\tau q/2}\Big)
	\leq ~ ~C\gamma_1^q.
\end{align*}
 Combining these results with the definition of $\|f\|_{A_q^{\mu}(\mathbb{T}^d)}$, we have 
\begin{align*}
 \mathcal{E}_2\leq &~C \gamma_1^q \  \gamma_2^q \ 
 \gamma_3^q \sum_{\k_0\in \JN}\sum_{\bl' \in \mathbb{Z}^d} (1+|\k_0+\bl' N|_2^2)^{\mu q/2} |\fhat(\k_0+\bl' N)|^q\\
 = &~C \gamma_1^q \  \gamma_2^q \ 
 \gamma_3^q  \sum_{\k\in \mathbb{Z}^d} (1+|\k |_2^2) ^{\mu q/2}|\fhat(\k )|^q\\
 = &~C \gamma_1^q \  \gamma_2^q \ 
 \gamma_3^q \  \|f\|_{A_q^{\mu}(\T^d)}^q.
\end{align*}
 Similarly, we can compute the case $q=1$ to get desired result. 
\end{proof}

\end{document}